\documentclass{amsart}
\usepackage{amsaddr}
\usepackage[english]{babel}

\usepackage{fullpage}
\usepackage{geometry}
\usepackage[dvipsnames]{xcolor}
\usepackage{graphicx}
\usepackage{multirow}
\usepackage{wrapfig}
\usepackage{longtable}
\usepackage{float}
\usepackage{setspace} 
\usepackage{array}
\usepackage{booktabs,tabularx}
\usepackage{amsmath,amssymb,enumerate,url,amsthm}
\usepackage{ytableau}
\usepackage[numbers,sort]{natbib}
\usepackage{subcaption}
\usepackage{gensymb}
\usepackage{enumitem}
\usepackage{pgf,tikz, varwidth}
\usetikzlibrary{arrows, matrix}
\usetikzlibrary{shapes.symbols, calc, shapes, decorations}
\usepackage{chngcntr}
\usepackage{hyperref,cases}

\usepackage[normalem]{ulem}

\usepackage[bottom]{footmisc}

\definecolor{dblue}{RGB}{0,54,135}
\definecolor{lorange}{RGB}{255,155,0}

\newcommand{\cp}{\mathrm{cp}}
\newcommand{\CP}{\mathcal{CP}}

\newcommand{\N}{\mathbb{N}}

\theoremstyle{definition}
\newtheorem*{remark}{Remark}
\newtheorem{theorem}{Theorem}
\newtheorem{cor}[theorem]{Corollary}
\newtheorem{lemma}[theorem]{Lemma}
\newtheorem{proposition}[theorem]{Proposition}
\newtheorem*{defn}{Definition}
\newtheorem{example}[theorem]{Example}
\newtheorem{conj}[theorem]{Conjecture}
\numberwithin{theorem}{section}

\allowdisplaybreaks

\title{On the positivity of infinite products connected to partitions with even parts below odd parts and copartitions}
\author{Hannah E. Burson}
\address{School of Mathematics\\ University of Minnesota, Twin Cities\\ Minneapolis, MN 55455} 
\email{hburson@umn.edu}

\author{Dennis Eichhorn}
\address{Department of Mathematics\\ University of California, Irvine\\ Irvine, CA 92697} \email{deichhor@math.uci.edu}

\date{}

\keywords{copartitions, partitions with even parts below odd parts, positivity, partitions}
\subjclass[2010]{05A17,  11P81}

\begin{document}
\maketitle

\begin{abstract}
    In this paper, we give a combinatorial proof of a positivity result of Chern related to Andrews's $\mathcal{EO}^*$-type partitions.
This combinatorial proof comes after 
reframing Chern's result in terms of copartitions.
     Using this new perspective, we also reprove an overpartition result of Chern by showing that it comes essentially ``for free" from our combinatorial proof and some basic properties of copartitions.
    Finally, the application of copartitions leads us to more general positivity conjectures for families of both infinite and finite products, with a proof in one special case.
\end{abstract}

\section{Introduction}  

In \cite{BursonEichhorn}, the authors  introduce copartitions.  Copartitions reframe and generalize Andrews's $\mathcal{EO}^*$-type partitions, which arose in a study of the combinatorics of the mock theta function $\nu(q)$ \cite{Andrews18}.
Each $(a,b,m)$-copartition is comprised of three partitions: 
\begin{itemize}
    \item a partition into parts $\equiv  a \pmod m$, which we call the ground;
    \item a partition into parts $\equiv b \pmod m$, which we call the sky; and
    \item a rectangular partition that unites them.
\end{itemize}
We denote the number of $(a,b,m)$-copartitions as $\cp_{a,b,m}(n)$.
Andrews's $\mathcal{EO}^*(n)$ counts the number of partitions of $n$ with all even parts smaller than all odd parts and only the largest even part appearing an odd number of times. We call the set of partitions counted by $\mathcal{EO}^*$ ``$\mathcal{EO}^*$-type partitions." 
A straightforward bijection reveals that $\cp_{1,1,2}(n) = \mathcal {EO}^*(2n)$, and so we may recast any $\mathcal{EO}^*$-type partition result in terms of copartitions.

This paper is motivated by a study into a weighted version of the $\mathcal{EO}^*(n)$ function completed by Chern in \cite{Chern21}. In Chern's study, he assigned a weight of $-1$ to the partition $\lambda$ if the largest even part of $\lambda$ is $2\pmod{4}$ and $1$ otherwise. 
We restate Chern's result in the language of copartitions. Let $\cp_{a,b,m}^o(n)$ (resp. $\cp_{a,b,m}^e(n)$) be the number of $(a,b,m)$-copartitions of $n$ with an odd (resp. even) number of ground parts. 
\begin{theorem}[Chern]\label{thm:inequality} 
For all $n\ge 0$,
 \begin{numcases}{\cp_{1,1,2}^o(n)} 
\le \cp_{1,1,2}^e(n) &if $n$ is even \notag\\
=\cp_{1,1,2}^e(n) &if $n$ is odd. \notag
\end{numcases}
\end{theorem}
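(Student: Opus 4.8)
The plan is to study the signed count $\Delta(n) := \cp^e_{1,1,2}(n) - \cp^o_{1,1,2}(n) = \sum_{\lambda}(-1)^{g(\lambda)}$, where the sum runs over all $(1,1,2)$-copartitions $\lambda$ of $n$ and $g(\lambda)$ denotes the number of ground parts. The two assertions of the theorem are precisely $\Delta(n)=0$ for odd $n$ and $\Delta(n)\ge 0$ for even $n$, so I would attack each parity of $n$ with its own size-preserving involution on $(1,1,2)$-copartitions, each changing $g$ by an odd amount and hence sign-reversing for $(-1)^{g}$. First I would record a parity bookkeeping fact: since every ground part and every sky part is $\equiv 1 \pmod 2$, and since the uniting rectangle always contributes an even amount to $n$ (a point I would verify from the definition), one has $n \equiv g(\lambda) + s(\lambda) \pmod 2$, where $s(\lambda)$ is the number of sky parts. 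This single congruence drives the whole dichotomy: for odd $n$ the integers $g$ and $s$ have opposite parity, while for even $n$ they share the same parity.

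For odd $n$ I would use copartition conjugation. Because $a=b=1$, transposing the entire copartition diagram swaps the ground and the sky (and transposes the rectangle $c^d \mapsto d^c$), yielding an involution $\tau$ with $g(\tau\lambda)=s(\lambda)$. When $n$ is odd, $g(\lambda)$ and $s(\lambda)$ always differ in parity, so $g(\tau\lambda)\not\equiv g(\lambda)\pmod 2$; thus $\tau$ is fixed-point-free and reverses the sign $(-1)^{g}$. Pairing each $\lambda$ with $\tau\lambda$ cancels the signed sum entirely and gives $\Delta(n)=0$, i.e. $\cp^o_{1,1,2}(n)=\cp^e_{1,1,2}(n)$.

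For even $n$ conjugation preserves the parity of $g$ and is useless, so I would build a second involution $\Psi$ that flips the parity of $g$. Touching only the ground and the rectangle cannot preserve the (even) size, since a ground part is odd while the rectangle is even, so $\Psi$ must move an odd amount of size between the ground and the sky. The candidate move compares the smallest ground part with the smallest sky part and transfers the smaller across, changing both $g$ and $s$ by one, preserving $n$, and flipping the parity of $g$. Pairing $\lambda$ with $\Psi\lambda$ then cancels all non-fixed points, so $\Delta(n)$ reduces to the signed count of the fixed points, and the inequality $\Delta(n)\ge 0$ would follow once I show that every fixed point has an even number of ground parts.

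I expect the construction and analysis of $\Psi$ to be the main obstacle. The transfer move must be made genuinely self-inverse in the presence of ties (equal smallest ground and sky parts) and in the presence of the constraints linking $g$ and $s$ to the rectangle's dimensions, which can block the move; it is exactly these blocked, highly symmetric configurations that should be the surviving fixed points. Two things then need care: a consistent tie-breaking and boundary rule keeping $\Psi$ an involution, and a parity argument showing each survivor carries an even ground count, so that it contributes $+1$ rather than $-1$ to $\Delta(n)$. By contrast, the odd-$n$ case via $\tau$ should be essentially immediate once the rectangle-parity bookkeeping is in place.
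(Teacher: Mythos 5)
Your parity bookkeeping is correct: all ground and sky parts are odd and $|\rho|=2\nu(\gamma)\nu(\sigma)$ is even, so $n\equiv \nu(\gamma)+\nu(\sigma)\pmod 2$. As a result, your treatment of odd $n$ is complete and valid: conjugation $(\gamma,\rho,\sigma)\mapsto(\sigma,\rho',\gamma)$ is a size-preserving involution on $(1,1,2)$-copartitions that sends the number of ground parts to the number of sky parts, and for odd $n$ these two numbers have opposite parity, so conjugation restricts to a bijection $\CP^o_{1,1,2}(n)\to\CP^e_{1,1,2}(n)$. That is a genuinely different and arguably simpler route to the equality case than the paper's, which instead proves surjectivity of its injection $\phi$ by iterating an inverse map $g$ and tracking where it can fail to be defined.

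The even case, however, is the substance of the theorem, and there is a genuine gap there: the involution $\Psi$ is never constructed, and the move you describe does not exist in the form stated. Transferring the smallest ground part to the sky (or conversely) changes $\nu(\gamma)$ and $\nu(\sigma)$ by one each, which forces the uniting rectangle $\rho$ to change from $\nu(\sigma)$ parts of size $2\nu(\gamma)$ to $\nu(\sigma)\pm1$ parts of size $2(\nu(\gamma)\mp1)$; its total size changes by $2(\nu(\gamma)-\nu(\sigma)\mp1)$, which is generally nonzero, so the map is not size-preserving unless the transferred part is resized by a configuration-dependent even amount. Once you do that, self-inverseness, positivity of the resized part, and the identification of the blocked configurations all have to be re-established, and none of this is carried out. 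Moreover the surviving fixed-point set cannot be an afterthought: the excess $\cp^e_{1,1,2}(n)-\cp^o_{1,1,2}(n)$ is generated by $1/(q^2;q^4)_\infty^2$, so the survivors must account for all pairs of partitions into parts $\equiv 2\pmod 4$, and proving that each survivor has an even number of ground parts is exactly the hard content. For comparison, the paper avoids the rectangle obstruction by passing to the pair $(\gamma',\rho|\sigma)$, with the rectangle absorbed into an enlarged sky, and its basic move removes the largest part and the largest even part of $\gamma'$ together, inserting their (odd) sum as a new sky part; iterating this yields an injection rather than a sign-reversing involution. You would need a construction of comparable precision before the inequality for even $n$ can be considered proved.
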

\noindent
Chern called for a direct combinatorial proof of Theorem \ref{thm:inequality}, and we achieve this in Section \ref{sec:combinatorialproof}.

Chern also studied the overpartition analogue of Andrews's set of partitions and proved an overpartition analogue of Theorem \ref{thm:inequality}. In Section \ref{sec:overcopartitions}, we show that Chern's overpartition analogue follows ``for free" from basic properties of overpartitions and our combinatorial proof of Theorem \ref{thm:inequality}.

Both Andrews's $\mathcal{EO}^*$-type partitions and $(a,b,m)$-copartitions have generating functions that enjoy nice infinite product forms. Thus,
Chern's Theorem \ref{thm:inequality} is equivalent to the non-negativity of the coefficients of 
$$\frac{(-q^{2};q^2)_\infty}{(-q;q^2)_\infty(q;q^2)_\infty}.$$
Similarly, considering a weighted version of the generating function for $(a,b,m)$-copartitions
leads naturally to a question about the more general $q$-products

\begin{equation}
    \frac{(-q^{a+b};q^m)_\infty}{(-q^a;q^m)_\infty(q^b;q^m)_\infty} \label{genProduct}.
\end{equation}
Many researchers are interested in the positivity of families $q$-products (see \cite{StantonPositivity}),
and in Section \ref{sec:conjecture}, we consider this product and offer a conjecture.

\section{Background on Copartitions}

In this section, we recall the definition of an $(a,b,m)$-copartition and give some relevant background.  
By a partition of $n$, we mean a non-increasing finite sequence of positive integers $\lambda=(\lambda_1,\lambda_2,\lambda_3, \ldots,\lambda_r)$ such that $\lambda_1+\lambda_2+\ldots+\lambda_r=n$. We use $\nu(\lambda)=r$ to denote the number of parts of the partition $\lambda$. 

An $(a,b,m)$-copartition is a triple of partitions $(\gamma,\rho,\sigma)$ such that all parts in $\gamma$ are congruent to $a\pmod{m}$, all parts in $\sigma$ are congruent to $b\pmod{m}$, and $\rho$ consists of exactly $\nu(\sigma)$ parts of size $m\cdot \nu(\gamma)$. To handle the cases where $a$ or $b$ is larger than $m$, we add the condition that the parts in $\gamma$ must have size at least $a$ and the parts in $\sigma$ must have size at least $b$.  We call $\gamma$ the \emph{ground} of the copartition and $\sigma$ the \emph{sky}.  Note that one can represent an $(a,b,m)$-copartition graphically by appending the $m$-modular diagram for $\sigma$ to the right of the $m$-modular diagram for $\rho$ and then appending the conjugate of the $m$-modular diagram for $\gamma$ below $\rho$. 

The graphical representation allows for a straightforward conjugation map on copartitions by reflecting the diagram about the line $y=-x$. Equivalently, one can define the conjugate of the $(a,b,m)$-copartition $(\gamma, \rho,\sigma)$ as the $(b,a,m)$-copartition $(\sigma,\rho',\gamma)$, where $\rho'$ is the conjugate of the $m$-modular diagram for $\rho$. 

\begin{example}
The following diagram represents the $(a,b,m)$-copartition $((4m+a,3m+a,a,a)$, $(4m,4m)$, $(3m+b,3m+b))$ and its conjugate $((3m+b,3m+b),(2m,2m,2m,2m),(4m+a,3m+a,a,a))$.

\begin{center}
\begin{minipage}{0.4\textwidth}
\begin{tikzpicture}[remember picture]

\node (n) {\begin{varwidth}{5cm}{
\begin{ytableau}
m&m&m&m&b&m&m&m\\
m&m&m&m&b&m&m&m\\
a&a&a&a\\
m&m\\
m&m\\
m&m\\
m
\end{ytableau}} \end{varwidth}};
\draw[very thick, black] ([xshift=0.4em, yshift=+2.3em] n.west)--([xshift=6.5em,yshift=2.3em]n.west)--([xshift=0em,yshift=-.4em] n.north);

\end{tikzpicture}
\end{minipage}
\begin{minipage}{0.3\textwidth}
\begin{tikzpicture}[remember picture]

\node (n) {\begin{varwidth}{5cm}{
\begin{ytableau}
m&m&a&m&m&m&m\\
m&m&a&m&m&m\\
m&m&a\\
m&m&a\\
b&b\\
m&m\\
m&m\\
m&m
\end{ytableau}} \end{varwidth}};
\draw[very thick, black] ([xshift=0.4em, yshift=+0em] n.west)--([xshift=3.4em,yshift=0em]n.west)--([xshift=-2.3em,yshift=-.4em] n.north);

\end{tikzpicture}
\end{minipage}
\end{center}
\end{example}

We also define the $\emph{enlarged sky}$ of a copartition, written as $\rho|\sigma$, by adding the $i$th part of $\sigma$ to the $i$th part of $\rho$ for each $i$. 
For example, in the $(3,1,4)$-copartition $((11,11,7),(12,12,12,12),(13,9,9,1))$, the enlarged sky is the partition $(25,21,21,13)$. Note that, because of the required dimensions of $\rho$, the smallest part of $\rho|\sigma$ must be of size at least $m$ times $\nu(\gamma)$.

The $\mathcal{EO}^*$-type partition corresponding to the $(1,1,2)$-copartition $(\gamma,\rho,\sigma)$ is obtained by creating two copies of each part of $\rho|\sigma$ and doubling each part of $\gamma'$. For example, $((7,7,3),(6,6),(5,1))$ corresponds to $(11,11,7,7,6,6,6,4,4,4,4).$ 

In \cite{BursonEichhorn}, the authors define $\cp_{a,b,m}(w,s,n)$ to be the number of $(a,b,m)$-copartitions of size $n$ that have $w$ ground parts and $s$ sky parts and prove the following infinite product form of the generating function for $\cp_{a,b,m}(w,s,n)$.
\begin{theorem}[Burson--Eichhorn\cite{BursonEichhorn}]\label{thm:sumProd}
We have 
\begin{align*}
   {\mathbf{cp}}_{a,b,m}(x,y,q)&:= \sum_{n=0}^\infty\sum_{w=0}^\infty\sum_{s=0}^\infty \cp_{a,b,m}(w,s,n)x^{s}y^{w}q^n\\
   &=\frac{(xyq^{a+b};q^m)_\infty}{(xq^b;q^m)_\infty(yq^a;q^m)_\infty}.
\end{align*}
\end{theorem}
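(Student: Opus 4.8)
The plan is to prove Theorem~\ref{thm:sumProd} by summing over copartitions according to their combinatorial structure and recognizing each factor of the infinite product as the generating function for one of the three constituent partitions $\gamma$, $\rho$, $\sigma$. First I would fix the number of ground parts $w = \nu(\gamma)$ and the number of sky parts $s = \nu(\sigma)$, since these two statistics are exactly what the variables $y$ and $x$ track and they determine the rectangle $\rho$ completely. Once $w$ and $s$ are fixed, a copartition decomposes as an independent choice of: a partition $\gamma$ into exactly $w$ parts each $\equiv a \pmod m$ (with parts $\ge a$), a partition $\sigma$ into exactly $s$ parts each $\equiv b \pmod m$ (with parts $\ge b$), and the rectangle $\rho$ consisting of $s$ parts each equal to $m w$. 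The weight $x^s y^w q^n$ factors across these three pieces, so the triple sum factors as a product of three separate generating functions.

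The key computation is to identify each factor. The rectangle $\rho$ contributes exactly $q^{m w s}$, since it is a fixed $s \times (mw)$ block contributing $mws$ to $n$. For the ground, I would sum over all partitions into exactly $w$ parts congruent to $a \pmod m$: writing each part as $a + m(\text{something})$ and tracking $y$ for the count of parts gives, after summing over $w$, a generating function in $y$ and $q$. For the sky, the analogous sum over partitions into exactly $s$ parts $\equiv b \pmod m$ gives a generating function in $x$ and $q$. The crucial point is that the $q^{mws}$ coupling term from the rectangle is precisely what links the exponent $w$ appearing in the ground sum to the exponent $s$ appearing in the sky sum, and this coupling is what produces the finite $q$-binomial or, in the limit, the shift that turns two independent geometric-type sums into the single product on the right-hand side.

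Concretely, I would use the standard identity that the generating function for partitions into exactly $k$ parts from an arithmetic progression is a monomial times a Gaussian-binomial or a reciprocal of a finite product, and then apply a $q$-series summation. The cleanest route is probably to recognize that fixing $w$ and summing over $\gamma$ and the $s$-dependence yields a sum of the form $\sum_{w,s} \genfrac[]{0pt}{}{\cdots}{\cdots} x^s y^w q^{\cdots}$ to which one applies the $q$-Gauss sum or the Cauchy identity
\[
\sum_{n=0}^\infty \frac{z^n}{(q;q)_n} = \frac{1}{(z;q)_\infty},
\qquad
\sum_{n=0}^\infty \frac{z^n q^{\binom{n}{2}}}{(q;q)_n} = (-z;q)_\infty,
\]
appropriately adapted to the modulus $m$ and the shifts $a$, $b$. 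Matching the numerator $(xyq^{a+b};q^m)_\infty$ against the coupling term and the denominators $(xq^b;q^m)_\infty$, $(yq^a;q^m)_\infty$ against the independent ground and sky sums finishes the identification.

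The main obstacle I anticipate is handling the $q^{mws}$ coupling correctly: because the rectangle's contribution entangles $w$ and $s$, the double sum does not factor trivially, and one must invoke a genuine bilateral or $q$-binomial summation rather than just multiplying two geometric series. Getting the normalization right so that the factor $(xyq^{a+b};q^m)_\infty$ emerges in the numerator (with the correct shift $a+b$ coming from adding the ground and sky offsets) is the delicate bookkeeping step. An alternative that sidesteps some of this is to build the product directly from the graphical/$m$-modular diagram description and a Durfee-rectangle style dissection, peeling off the rectangle and reading off the two arithmetic-progression partitions as the arms; I would keep that as a fallback if the direct $q$-series summation proves unwieldy.
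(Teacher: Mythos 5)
Your plan is correct and is essentially the standard argument for this result (which the paper cites from \cite{BursonEichhorn} rather than reproving here; the same computation is carried out explicitly for the overpartition analogue in Section \ref{sec:overcopartitions}): fixing $w$ and $s$ decouples the three constituents and yields
$\sum_{w,s\ge 0} \frac{y^w q^{aw}}{(q^m;q^m)_w}\cdot\frac{x^s q^{bs}}{(q^m;q^m)_s}\cdot q^{mws}$,
after which one sums over $s$ using Euler's identity $\sum_{s\ge 0} z^s/(q^m;q^m)_s = 1/(z;q^m)_\infty$ with $z = xq^{b+mw}$. The one refinement to your write-up is that the remaining sum over $w$, namely $\frac{1}{(xq^b;q^m)_\infty}\sum_{w\ge 0}\frac{(xq^b;q^m)_w}{(q^m;q^m)_w}(yq^a)^w$, is closed by the full $q$-binomial theorem $\sum_{w\ge 0}\frac{(\alpha;q^m)_w}{(q^m;q^m)_w}z^w = (\alpha z;q^m)_\infty/(z;q^m)_\infty$ (with $\alpha = xq^b$, $z = yq^a$), not by the second Euler identity you display; it is this step that produces the numerator $(xyq^{a+b};q^m)_\infty$.
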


By setting $x=1$ and $y=-1$ in Theorem \ref{thm:sumProd}, we immediately get the following corollary, which interprets \eqref{genProduct} as a difference of copartition functions. 
\begin{cor}
For $a,b,m \in \N$ and $n \in \N_0$,
\begin{equation}\label{eq:weightedProd}
    \sum_{n=0}^\infty (\cp_{a,b,m}^e(n)-\cp_{a,b,m}^o(n))q^n=\frac{(-q^{a+b};q^m)_\infty}{(-q^a;q^m)_\infty(q^b;q^m)_\infty}.
\end{equation}
\end{cor}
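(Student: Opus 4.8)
The plan is to obtain the identity directly by specializing the three-variable generating function of Theorem \ref{thm:sumProd}. Recall that there $x$ is the variable marking the number $s$ of sky parts and $y$ marks the number $w$ of ground parts, so that
$$\mathbf{cp}_{a,b,m}(x,y,q)=\sum_{n=0}^\infty\sum_{w=0}^\infty\sum_{s=0}^\infty \cp_{a,b,m}(w,s,n)\,x^{s}y^{w}q^n=\frac{(xyq^{a+b};q^m)_\infty}{(xq^b;q^m)_\infty(yq^a;q^m)_\infty}.$$
Since $\cp^e$ and $\cp^o$ are defined by the parity of the number of ground parts, the natural substitution is $x=1$ (to forget the sky count) and $y=-1$ (to convert the ground count into a sign). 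First I would note that this substitution is legitimate at the level of formal power series in $q$: for each fixed $n$ only finitely many pairs $(w,s)$ satisfy $\cp_{a,b,m}(w,s,n)\neq 0$, so the inner double sum is finite and the specialization introduces no convergence issue.

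Next I would carry out the substitution on each side. On the product side, $xy=-1$ gives numerator $(-q^{a+b};q^m)_\infty$, while $xq^b=q^b$ and $yq^a=-q^a$ give denominator $(q^b;q^m)_\infty(-q^a;q^m)_\infty$, which is exactly the right-hand side of \eqref{eq:weightedProd} after reordering the two denominator factors. On the series side, the weight becomes $x^sy^w=(-1)^w$, so grouping the terms by the parity of $w$ yields
$$\sum_{n=0}^\infty\Bigl(\sum_{w\text{ even}}\sum_{s}\cp_{a,b,m}(w,s,n)-\sum_{w\text{ odd}}\sum_{s}\cp_{a,b,m}(w,s,n)\Bigr)q^n.$$
By the definitions of $\cp^e_{a,b,m}(n)$ and $\cp^o_{a,b,m}(n)$ as the number of copartitions of $n$ with an even, respectively odd, number of ground parts, the two bracketed sums are $\cp^e_{a,b,m}(n)$ and $\cp^o_{a,b,m}(n)$, so the series side collapses to $\sum_{n\ge0}(\cp^e_{a,b,m}(n)-\cp^o_{a,b,m}(n))q^n$. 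Equating the two specialized sides gives the claim.

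There is essentially no hard step here: the result is an immediate corollary, and the only point requiring care is bookkeeping, namely confirming that it is $y$ (not $x$) that marks the ground parts, and hence that $y=-1$ is the correct choice to produce the parity sign matching the definitions of $\cp^e$ and $\cp^o$. In other words, the sole ``obstacle'' is purely notational: one must keep the roles of sky versus ground consistent between Theorem \ref{thm:sumProd} and the statement being proved.
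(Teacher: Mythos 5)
Your proposal is correct and is exactly the paper's argument: the corollary is obtained by setting $x=1$ and $y=-1$ in Theorem \ref{thm:sumProd}, with the sign $(-1)^w$ separating copartitions by the parity of the number of ground parts. The extra remarks on formal-power-series legitimacy and on which variable marks the ground are sound bookkeeping but add nothing beyond what the paper states.
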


\section{Combinatorial proof of Theorem \ref{thm:inequality}}\label{sec:combinatorialproof}
In this section, we give a combinatorial proof of Theorem \ref{thm:inequality} by describing an injection $\phi:\CP_{1,1,2}^o(n)\to \CP_{1,1,2}^e(n),$ where $\CP_{1,1,2}^o(n)$ (resp. $\CP_{1,1,2}^e(n)$) is the set of copartitions counted by the function $\cp_{1,1,2}^o(n)$ (resp. $\cp_{1,1,2}^e(n)$).  We will also show that, when $n$ is even, this injection is a bijection. 

We begin with a general outline of the injection. For $(\gamma, \rho,\sigma)\in \mathcal{CP}_{1,1,2}(n)$, we define the map $\phi:\mathcal{CP}_{1,1,2}^o(n)\to \mathcal{CP}_{1,1,2}^e(n)$ by iterating the process of removing the largest part and largest even part of $\gamma'$ (the conjugate partition of $\gamma$), and adding the sum of the removed parts to $\rho|\sigma$ as a new part. 
Note that the map is well defined when the largest part of $\gamma'$ is odd. 
However, while iterating this process, we may obtain objects that are no longer $(1,1,2)$-copartitions.  
Thus, to show the details, we begin by defining a set of pairs of partitions that slightly generalize copartitions.

Let $\CP'(n)$ denote the set of pairs of partitions  $(\tilde\gamma,\tilde\sigma)$ with total size $n$ such that all parts of $\tilde{\sigma}$ are odd, the smallest part of $\tilde{\sigma}$ is greater than the sum of the largest part of $\tilde{\gamma}$ and the largest even part of $\tilde{\gamma}$, and the only part in $\tilde\gamma$ appearing an odd number of times is either the largest part or the largest even part.
Note that, if $(\gamma,\rho,\sigma)\in \mathcal{CP}_{1,1,2}(n)$, then $(\gamma',\rho|\sigma)\in \mathcal{CP}'(n)$. Furthermore, we can write $\mathcal{CP}'(n)=\mathcal{CP}_o'(n)\cup\mathcal{CP}_e'(n)$, where $\mathcal{CP}'_o(n)$ consists of all pairs in which the largest part of $\tilde{\gamma}$ is odd and appears an odd number of times and $\mathcal{CP}_e'(n)$ consists of all pairs in which the largest even part of $\tilde{\gamma}$ (which may or may not also be the largest part of $\tilde{\gamma}$) is the only part of $\tilde{\gamma}$ appearing an odd number of times.

For $(\tilde\gamma,\tilde\sigma)\in \CP'(n)$, we define $f((\tilde\gamma,\tilde\sigma))=(\tilde\gamma_f,\tilde\sigma_f)$ to be the pair obtained by removing the largest part and the largest even part of $\tilde\gamma$, which we call $\ell(\tilde\gamma)$ and $\ell_e(\tilde{\gamma})$, respectively, and adding a part of size $\ell(\tilde\gamma)+\ell_e(\tilde\gamma)$ to $\tilde\sigma$. 
If $\tilde{\gamma}$ has no even parts, we define $\ell_e(\tilde{\gamma})=0$. Note that, when $\ell(\tilde\gamma)\ne \ell_e(\tilde\gamma)$, $f$ is size-preserving. Additionally, we define $g((\tilde\gamma,\tilde\sigma))=(\tilde\gamma_g,\tilde\sigma_g)$ to be the pair obtained by removing the smallest part from $\tilde\sigma$, which we denote as $s(\tilde\sigma)$, and adding two new parts to $\tilde\gamma$: one equal to the unique part of $\tilde\gamma$ appearing an odd number of times, which we denote as $o(\tilde\gamma)$, and one part equal to the difference between the size of the smallest part of $\tilde\sigma$ and $o(\tilde\gamma)$.

\begin{example}
Consider $(\tilde\gamma,\tilde\sigma)=(\{7,6,6,3,3,2,2,2,2\},\{15,15,13\})\in\CP'_o(76)\subset\CP'(76).$ Note that $o(\tilde\gamma)=\ell(\tilde\gamma)=7$, $\ell_e(\tilde\gamma)=6$, and $s(\tilde\sigma)=13$. Then, $f(\tilde\gamma,\tilde\sigma)=(\{6,3,3,2,2,2,2\},\{15,15,13,13\})\in \CP'_e(76)$ and $g(\tilde\gamma,\tilde\sigma)=(\{7,7,6,6,6,3,3,2,2,2,2\},\{15,15\})\in \CP'_e(76).$
\end{example}

 We now give three lemmas which may appear to be quite technical, but they simply clarify the domain and range of $f$ and $g$ and demonstrate that $g$ is the inverse of $f$. 
Note that $f$ is well-defined when  $\ell(\tilde{\gamma})\ne\ell_e(\tilde{\gamma})$ and $g$ is well-defined for all $(\tilde{\gamma},\tilde{\sigma})\in \CP'(n)$ with non-empty $\tilde{\sigma}$. Furthermore, where they are well-defined, $f$ and $g$ both change the number of appearances of $\ell(\tilde\gamma)$ and $\ell_e(\tilde\gamma)$ by exactly one, so that $o(\tilde\gamma)$ and $o(\tilde\gamma_f)$ have opposite parity. (Note that the previous statement holds even if the number of appearances of a part changes from zero to one or from one to zero). Thus, we obtain the following lemma. 
\begin{lemma}\label{lem}
\begin{enumerate}[label={(\arabic*)}]

    \item If $(\tilde{\gamma},\tilde{\sigma})\in \CP'_o(n)$, then $f((\tilde{\gamma},\tilde{\sigma}))\in \CP'_e(n)$. \label{lem:fCPotoCPe}
    \item If $(\tilde\gamma,\tilde\sigma)\in \CP_o'(n)$ with $s(\tilde\sigma)<2\ell(\tilde\gamma)$, then $g((\tilde\gamma,\tilde\sigma))\in \CP'_e(n)$ and $\ell(\tilde\gamma_g)>\ell_e(\tilde\gamma_g)$. \label{lem:gCPotoCPe}
    \item If $(\tilde\gamma,\tilde\sigma)\in \CP'_e(n)$ and $\tilde{\sigma}\ne \{\}$, then $g((\tilde\gamma,\tilde\sigma))\in \CP'_o(n)$. \label{lem:gCPetoCPo}
    \item If $(\tilde\gamma,{\tilde{\sigma}})\in \CP'_e(n)$ with $\ell(\tilde\gamma)> \ell_e(\tilde\gamma)$, then $f((\tilde\gamma,\tilde\sigma))\in \CP'_o(n)$.  \label{lem:fCPetoCPo}

\end{enumerate}
\end{lemma}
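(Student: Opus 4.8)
The plan is to prove all four statements by direct verification: in each case I would check that the image satisfies the three defining conditions of $\CP'(n)$ and then use a parity count to decide whether it lands in $\CP'_o(n)$ or $\CP'_e(n)$. Writing $\ell=\ell(\tilde\gamma)$ and $\ell_e=\ell_e(\tilde\gamma)$, I would use repeatedly that $\ell$ and $\ell_e$ have opposite parity whenever they differ — indeed the largest part of a ground is odd exactly when it strictly exceeds the largest even part — so the transferred sky part $\ell+\ell_e$ is odd. The organizing idea is that $f$ and $g$ each alter multiplicities so as to swap which of the two extreme parts (largest part versus largest even part) carries odd multiplicity, and this swap is precisely the toggle between $\CP'_o$ and $\CP'_e$.

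For the two statements about $f$ (parts \ref{lem:fCPotoCPe} and \ref{lem:fCPetoCPo}) I would first record that the inserted sky part $\ell+\ell_e$ is odd, so $\tilde\sigma_f$ still has all parts odd. For the size condition I would use that deleting parts cannot increase a maximum, giving $\ell(\tilde\gamma_f)+\ell_e(\tilde\gamma_f)\le \ell+\ell_e$, while $\ell+\ell_e$ is at most the old smallest sky part and hence becomes the smallest part of $\tilde\sigma_f$; this yields the required inequality. Then I would run the parity count: removing one copy each of $\ell$ and $\ell_e$ flips which of them has odd multiplicity, turning the $\CP'_o$ hypothesis of part \ref{lem:fCPotoCPe} into its $\CP'_e$ conclusion, and conversely for part \ref{lem:fCPetoCPo}. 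In each case I must confirm that the value whose multiplicity becomes odd is indeed the relevant extreme part of $\tilde\gamma_f$ (the largest even part when landing in $\CP'_e$, the largest part when landing in $\CP'_o$).

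For the two statements about $g$ (parts \ref{lem:gCPotoCPe} and \ref{lem:gCPetoCPo}), the crux is locating the inserted ground parts $o(\tilde\gamma)$ and $s(\tilde\sigma)-o(\tilde\gamma)$ among the existing parts. In part \ref{lem:gCPotoCPe}, where $o(\tilde\gamma)=\ell$, the hypotheses $\ell+\ell_e\le s(\tilde\sigma)<2\ell$ force $\ell_e\le s(\tilde\sigma)-\ell<\ell$, so the even part $s(\tilde\sigma)-\ell$ becomes the new largest even part while $\ell$ stays the largest part; this simultaneously gives $\ell(\tilde\gamma_g)>\ell_e(\tilde\gamma_g)$ and, by the parity count, membership in $\CP'_e$. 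In part \ref{lem:gCPetoCPo}, where $o(\tilde\gamma)=\ell_e$, the inequality $s(\tilde\sigma)\ge \ell+\ell_e$ gives $s(\tilde\sigma)-\ell_e\ge \ell$, so the odd part $s(\tilde\sigma)-\ell_e$ becomes the new largest part and the count places the image in $\CP'_o$. In both cases the size condition for $\tilde\sigma_g$ holds because its smallest part is the old second-smallest sky part, which is at least $s(\tilde\sigma)=\ell(\tilde\gamma_g)+\ell_e(\tilde\gamma_g)$.

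The main obstacle I expect is the case analysis hidden inside these parity counts, rather than any single difficult inequality. I would need to treat carefully the boundary case $s(\tilde\sigma)=\ell+\ell_e$, where an inserted part coincides with an existing extreme part; the degenerate situations where $\ell_e=0$ because $\tilde\gamma$ has no even part, or where a multiplicity changes between $0$ and $1$; the possibility in part \ref{lem:gCPetoCPo} that the largest part of the ground is itself even, so $\ell=\ell_e$; and the vacuous instances where $\tilde\sigma_g$ is empty. In each of these I must verify not merely that some part of the new ground has odd multiplicity, but that it is exactly the largest part or the largest even part, which is what membership in $\CP'(n)$ requires.
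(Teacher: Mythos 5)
Your plan is correct and follows essentially the same route as the paper, which justifies this lemma in the single paragraph preceding it by observing that $f$ and $g$ each change the multiplicities of $\ell(\tilde\gamma)$ and $\ell_e(\tilde\gamma)$ by exactly one, so that which extreme part carries odd multiplicity toggles, and with it membership in $\CP'_o$ versus $\CP'_e$. The additional verifications you flag (oddness of the transferred part $\ell+\ell_e$, the inequalities $\ell_e\le s(\tilde\sigma)-\ell<\ell$ and $s(\tilde\sigma)-\ell_e\ge\ell$ locating the new extreme parts, preservation of the size condition, and the degenerate cases) are exactly the details the paper leaves implicit, and they all check out.
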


\begin{lemma}\label{lem:gcompf}
For $(\tilde\gamma,\tilde\sigma)\in \CP'(n)$ with $\ell(\tilde\gamma)>\ell_e(\tilde\gamma)$, $g\circ f((\tilde\gamma,\tilde\sigma))=(\tilde\gamma,\tilde\sigma)$.
\end{lemma}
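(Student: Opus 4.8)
The plan is to verify directly that applying $g$ undoes the effect of $f$ by tracking exactly which parts are moved at each stage. Let me set up notation: given $(\tilde\gamma,\tilde\sigma)\in\CP'(n)$ with $\ell(\tilde\gamma)>\ell_e(\tilde\gamma)$, write $\ell=\ell(\tilde\gamma)$ and $\ell_e=\ell_e(\tilde\gamma)$, so that $\ell>\ell_e$. Since the largest part strictly exceeds the largest even part, the largest part $\ell$ is odd, and because we are in $\CP'(n)$ the unique part of odd multiplicity is either $\ell$ or $\ell_e$. First I would record what $f$ does: it deletes one copy each of $\ell$ and $\ell_e$ from $\tilde\gamma$ and inserts a single new part $\ell+\ell_e$ into $\tilde\sigma$. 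The first thing to check is that this inserted part is genuinely the \emph{smallest} part of $\tilde\sigma_f$, i.e.\ that $s(\tilde\sigma_f)=\ell+\ell_e$; this is exactly the defining inequality for $\CP'(n)$, namely that the smallest part of $\tilde\sigma$ exceeds $\ell+\ell_e$, so the newly added part sits strictly below everything already in $\tilde\sigma$.

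Next I would identify the odd-multiplicity part of $\tilde\gamma_f$, call it $o(\tilde\gamma_f)$, which is what $g$ reads off. By the parity-flipping remark quoted just before the lemma, $f$ changes the multiplicities of both $\ell$ and $\ell_e$ by one, so $o(\tilde\gamma_f)$ has the opposite identity from $o(\tilde\gamma)$: if the odd part of $\tilde\gamma$ was $\ell$ then after removing one copy of each, the odd part of $\tilde\gamma_f$ is $\ell_e$, and conversely. In either case one checks $o(\tilde\gamma_f)\in\{\ell,\ell_e\}$. Now I would apply $g$ to $(\tilde\gamma_f,\tilde\sigma_f)$: it removes the smallest part of $\tilde\sigma_f$, which we have just shown equals $\ell+\ell_e$, and reinserts into the ground two parts, one equal to $o(\tilde\gamma_f)$ and one equal to $(\ell+\ell_e)-o(\tilde\gamma_f)$. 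The key arithmetic point is that $\{o(\tilde\gamma_f),\,(\ell+\ell_e)-o(\tilde\gamma_f)\}=\{\ell,\ell_e\}$ in both cases, so $g$ puts back exactly one copy of $\ell$ and one copy of $\ell_e$, precisely the two parts that $f$ had removed. Hence the ground is restored to $\tilde\gamma$ and the sky to $\tilde\sigma$, giving $g\circ f((\tilde\gamma,\tilde\sigma))=(\tilde\gamma,\tilde\sigma)$.

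The step I expect to require the most care is confirming that $g$ is actually being evaluated correctly on $(\tilde\gamma_f,\tilde\sigma_f)$ — specifically, that $s(\tilde\sigma_f)=\ell+\ell_e$ rather than some preexisting part of $\tilde\sigma$, and that $o(\tilde\gamma_f)$ is well defined (that $\tilde\gamma_f$ indeed has a unique part of odd multiplicity lying in $\CP'(n)$). Both hinge on the structural hypotheses built into $\CP'(n)$: the separation inequality guarantees the freshly added sky part is smallest, and Lemma \ref{lem}\ref{lem:fCPotoCPe} (together with its $\CP'_e\to\CP'_o$ analogue, part \ref{lem:fCPetoCPo}) guarantees $f((\tilde\gamma,\tilde\sigma))$ lands in $\CP'(n)$ so that $g$ is well defined on it. I would also briefly note the edge case $\ell_e=0$ (no even parts in $\tilde\gamma$), where $f$ adds a part of size $\ell$ and $g$ correctly splits it back as $\ell$ and $0$, the latter contributing nothing — consistent with $\{o(\tilde\gamma_f),\ell-o(\tilde\gamma_f)\}=\{\ell,0\}$. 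With these verifications in place the composition identity follows by direct comparison of the two multisets of parts.
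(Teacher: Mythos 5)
Your proposal is correct and follows essentially the same route as the paper's proof: decompose the action of $f$ explicitly, split into the two cases according to whether $o(\tilde\gamma)$ is $\ell(\tilde\gamma)$ or $\ell_e(\tilde\gamma)$, and check that $g$ reinserts exactly the removed parts $\{\ell(\tilde\gamma),\ell_e(\tilde\gamma)\}$. Your added verifications that $s(\tilde\sigma_f)=\ell(\tilde\gamma)+\ell_e(\tilde\gamma)$ and the treatment of the $\ell_e(\tilde\gamma)=0$ edge case are points the paper leaves implicit, and they are handled correctly.
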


\begin{proof}
Let $(\tilde\gamma, \tilde\sigma)\in \CP'(n)$ such that $\ell(\tilde\gamma)>\ell_e(\tilde\gamma)$.  We can decompose $\tilde\gamma$ as $\{\ell(\tilde\gamma),\ell_e(\tilde\gamma)\}\cup\widehat\gamma$. Then, we get that $f((\tilde\gamma,\tilde\sigma))=(\widehat\gamma,\tilde\sigma\cup\{\ell(\tilde\gamma)+\ell_e(\tilde\gamma)\})$. Now, we consider two cases as follows:
\begin{enumerate}
    \item[Case 1:] $(\tilde\gamma,\tilde\sigma)\in \CP'_o(n).$ In this case, we note that $o(\tilde\gamma_f)=o(\widehat\gamma)=\ell_e(\widehat\gamma)=\ell_e(\tilde\gamma)$. Thus, $g\circ f(\tilde\gamma,\tilde\sigma)=(\{\ell(\tilde\gamma),\ell_e(\tilde\gamma)\}\cup \widehat\gamma,\tilde\sigma)=(\tilde\gamma,\tilde\sigma)$.
    \item[Case 2:] $(\tilde\gamma,\tilde\sigma)\in \CP'_e(n).$ In this case, we note that $o(\tilde\gamma_f)=o(\widehat\gamma)=\ell(\widehat\gamma)=\ell(\tilde\gamma).$ Furthermore, $s(\tilde\sigma_f)-o(\tilde\gamma_f)=\ell(\tilde\gamma)+\ell_e(\tilde\gamma)-\ell(\tilde\gamma)=\ell_e(\tilde\gamma)$. Therefore, $g\circ f(\tilde\gamma,\tilde\sigma)=(\{\ell(\tilde\gamma),\ell_e(\tilde\gamma)\}\cup \widehat\gamma,\sigma)=(\tilde\gamma,\tilde\sigma).$ \qedhere
\end{enumerate}
\end{proof}
\begin{lemma}\label{lem:fcompg}
For $(\tilde\gamma,\tilde\sigma)\in \CP'_e(n)$ or $(\tilde\gamma,\tilde\sigma)\in\CP'_o(n)$ with $s(\tilde\sigma)<2\ell(\tilde\gamma)$, $f\circ g((\tilde\gamma,\tilde\sigma))=(\tilde\gamma,\tilde\sigma)$.
\end{lemma}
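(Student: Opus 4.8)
The plan is to verify directly that $f$ undoes the effect of $g$, mirroring the case analysis used for $g\circ f$ in Lemma \ref{lem:gcompf}. Starting from $(\tilde\gamma,\tilde\sigma)$ (with $\tilde\sigma$ nonempty, so that $g$ is defined), I would first write out $g((\tilde\gamma,\tilde\sigma))=(\tilde\gamma_g,\tilde\sigma_g)$ explicitly: $\tilde\sigma_g$ is $\tilde\sigma$ with its smallest part $s(\tilde\sigma)$ removed, and $\tilde\gamma_g=\tilde\gamma\cup\{o(\tilde\gamma),\,s(\tilde\sigma)-o(\tilde\gamma)\}$. Since the two parts $g$ appends sum to $s(\tilde\sigma)$, it suffices to prove that these are precisely the largest part and the largest even part of $\tilde\gamma_g$; then $f$ strips off exactly those two parts and returns a part of size $s(\tilde\sigma)$ to the sky, recovering $(\tilde\gamma,\tilde\sigma)$.

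The heart of the argument is the identification $\{\ell(\tilde\gamma_g),\ell_e(\tilde\gamma_g)\}=\{o(\tilde\gamma),\,s(\tilde\sigma)-o(\tilde\gamma)\}$, which I would establish in two cases using the defining inequality $s(\tilde\sigma)>\ell(\tilde\gamma)+\ell_e(\tilde\gamma)$ of $\CP'(n)$ together with the fact that every part of $\tilde\sigma$ is odd. In the case $(\tilde\gamma,\tilde\sigma)\in\CP'_e(n)$, we have $o(\tilde\gamma)=\ell_e(\tilde\gamma)$ (even), so $s(\tilde\sigma)-o(\tilde\gamma)$ is odd; the inequality gives $s(\tilde\sigma)-o(\tilde\gamma)>\ell(\tilde\gamma)$, so this new odd part becomes $\ell(\tilde\gamma_g)$, while the appended copy of $\ell_e(\tilde\gamma)$ remains the largest even part. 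In the case $(\tilde\gamma,\tilde\sigma)\in\CP'_o(n)$ with $s(\tilde\sigma)<2\ell(\tilde\gamma)$, we have $o(\tilde\gamma)=\ell(\tilde\gamma)$ (odd), so $s(\tilde\sigma)-o(\tilde\gamma)$ is even; here the hypothesis $s(\tilde\sigma)<2\ell(\tilde\gamma)$ keeps $s(\tilde\sigma)-o(\tilde\gamma)<\ell(\tilde\gamma)$ so that $\ell(\tilde\gamma_g)=\ell(\tilde\gamma)$, while the inequality forces $s(\tilde\sigma)-o(\tilde\gamma)>\ell_e(\tilde\gamma)$, making this new even part the largest even part $\ell_e(\tilde\gamma_g)$.

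In both cases the two appended parts have opposite parity, hence are distinct, so $f$ is well-defined on $(\tilde\gamma_g,\tilde\sigma_g)$ (this is also guaranteed by Lemma \ref{lem}\ref{lem:gCPotoCPe} and \ref{lem:gCPetoCPo}, which place $(\tilde\gamma_g,\tilde\sigma_g)$ in $\CP'$). Removing $\ell(\tilde\gamma_g)$ and $\ell_e(\tilde\gamma_g)$ from $\tilde\gamma_g$ returns $\tilde\gamma$, and appending $\ell(\tilde\gamma_g)+\ell_e(\tilde\gamma_g)=s(\tilde\sigma)$ to $\tilde\sigma_g$ returns $\tilde\sigma$, which is exactly $f\circ g((\tilde\gamma,\tilde\sigma))=(\tilde\gamma,\tilde\sigma)$. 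The main obstacle is the case distinction in the middle step: the two hypotheses ($\CP'_e$ versus $\CP'_o$ with $s(\tilde\sigma)<2\ell(\tilde\gamma)$) are exactly what is needed to control whether the larger of the two appended parts lands in the ``largest part'' slot or the ``largest even part'' slot, and omitting the bound $s(\tilde\sigma)<2\ell(\tilde\gamma)$ in the odd case would break the recovery.
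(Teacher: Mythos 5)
Your proposal is correct and follows essentially the same route as the paper's proof: decompose $\tilde\sigma$ as $\{s(\tilde\sigma)\}\cup\widehat\sigma$, split into the cases $\CP'_e(n)$ and $\CP'_o(n)$ with $s(\tilde\sigma)<2\ell(\tilde\gamma)$, and use the bounds $s(\tilde\sigma)>\ell(\tilde\gamma)+\ell_e(\tilde\gamma)$ and $s(\tilde\sigma)<2\ell(\tilde\gamma)$ to identify the two parts appended by $g$ as exactly $\ell(\tilde\gamma_g)$ and $\ell_e(\tilde\gamma_g)$, which $f$ then removes. Your added remark that the two appended parts have opposite parity, so that $f$ is well-defined on $(\tilde\gamma_g,\tilde\sigma_g)$, is a small point the paper leaves implicit, but otherwise the arguments coincide.
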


\begin{proof}
Let $(\tilde\gamma,\tilde\sigma)\in \CP'_e(n)$ or $(\tilde\gamma,\tilde\sigma)\in\CP'_o(n)$ with $s(\tilde\sigma)<2\ell(\tilde\gamma).$ We can decompose $\tilde\sigma$ as $\{s(\tilde\sigma)\}\cup \widehat\sigma$. Then, $g((\tilde\gamma,\tilde\sigma))=((\{s(\tilde\sigma)-o(\tilde\gamma),o(\tilde\gamma)\}\cup\tilde\gamma),\widehat\sigma)$. We consider two cases as follows:
\begin{enumerate}
    \item[Case 1:] $(\tilde\gamma,\tilde\sigma)\in \CP_o(n)$ with $s(\tilde\sigma)<2\ell(\tilde\gamma)$. Thus, $o(\tilde\gamma)=\ell(\tilde\gamma)>\ell_e(\tilde\gamma)$. Note that, because $s(\tilde\sigma)<2\ell(\tilde\gamma)$ and $o(\tilde\gamma)=\ell(\tilde\gamma),$ $s(\tilde\sigma)-o(\tilde\gamma)=s(\tilde\sigma)-\ell(\tilde\gamma)<\ell(\tilde\gamma)$. Moreover, because $s(\tilde\sigma)\ge \ell(\tilde\gamma)+\ell_e(\tilde\gamma)$, $s(\tilde\sigma)-o(\tilde\gamma)\ge \ell_e(\tilde\gamma)$. Thus, since $\ell(\tilde\gamma)$ is odd, $\ell_e(\tilde\gamma_g)=s(\tilde\sigma)-o(\tilde\gamma)=s(\tilde\sigma)-\ell(\tilde\gamma)$. Therefore, $f\circ g((\tilde\gamma,\tilde\sigma))=(\tilde\gamma,\widehat\sigma\cup\{s(\tilde\sigma)-\ell(\tilde\gamma)+\ell(\tilde\gamma)\})=(\tilde\gamma,\tilde\sigma)$.
    \item[Case 2:] $(\tilde\gamma,\tilde\sigma)\in \CP_e(n).$ Thus, $o(\tilde\gamma)=\ell_e(\tilde\gamma).$ Then, $\ell_e(\tilde\gamma_g)=\ell_e(\tilde\gamma)=o(\tilde\gamma)$ and $\ell(\tilde\gamma_g)=s(\tilde\sigma)-\ell_e(\tilde\gamma).$ Therefore, $f\circ g(\tilde\gamma,\tilde\sigma)=(\tilde\gamma,\widehat\sigma\cup\{s(\tilde\sigma)-\ell_e(\tilde\gamma)+\ell_e(\tilde\gamma)\})=(\tilde\gamma,\tilde\sigma).$ \qedhere
\end{enumerate}
\end{proof}

We are now ready to prove Theorem \ref{thm:inequality}. 

\begin{proof}
Let $n\in \N$. Note that we can recast any $(\gamma,\rho,\sigma)\in \CP_{1,1,2}^o(n)$ as $(\gamma',\rho|\sigma)\in \CP'_o(n)$ where $s(\rho|\sigma)>2\ell(\gamma')$,
and we can recast any $(\widehat\gamma,\widehat\rho,\widehat\sigma)\in \CP_{1,1,2}^e(n)$ as $({\widehat\gamma}',\widehat\rho|\widehat\sigma)\in \CP'_e(n)$ where $\ell({\widehat\gamma}')=\ell_e({\widehat\gamma}').$ 
We now explicitly give an injection
$\phi:\{(\tilde\gamma,\tilde\sigma)\in \CP'_o(n) :s(\tilde\sigma)>2\ell(\tilde\gamma)\} \to \{(\tilde\gamma,\tilde\sigma)\in \CP'_e(n):\ell(\tilde\gamma)=\ell_e(\tilde\gamma)\}$, which equivalently gives an injection  $\bar\phi:\CP_{1,1,2}^o(n)\to \CP_{1,1,2}^e(n)$ in the natural way. 

By Lemmas \ref{lem} -- \ref{lem:fcompg}, we have that $f$ and $g$ are inverse functions that are well-defined on most pairs in $\CP'(n)$. Note that each application of $f$ reduces the number of parts of $\tilde{\gamma}$ by either 1 or 2. Thus, for each $(\tilde\gamma,\tilde\sigma) \in \CP'_e(n)$ with $s(\tilde\sigma) > 2\ell(\tilde\gamma)$, letting $(\tilde\gamma_{f^k}, \tilde\sigma_{f^k}):=f^k((\tilde\gamma,\tilde\sigma))$, there must be some minimum positive integer $k$ such that either the largest part remaining in $\tilde\gamma_{f^k}$ is even or $\tilde{\gamma}_{f^k}$ is empty.
In other words,
$f^k((\tilde\gamma,\tilde\sigma))\in \CP'_e(n)$ with $\ell(\tilde\gamma_{f^k})=\ell_e(\tilde\gamma_{f^k}).$
For each $(\tilde\gamma,\tilde\sigma)\in \CP_o'(n)$ with $s(\tilde\sigma)>2\ell(\tilde\gamma),$ 
define $\phi((\tilde\gamma,\tilde\sigma)) = f^k((\tilde\gamma,\tilde\sigma))\in \CP'_e(n)$. 

By Lemmas \ref{lem:gcompf} and \ref{lem:fcompg}, we know that $f:\{(\tilde\gamma,\tilde\sigma)\in\mathcal{CP}'(n)|\ell(\tilde\gamma)> \ell_e(\tilde\gamma)\}\to \{(\tilde\delta, \tilde\tau) \in \mathcal{CP}'(n) | \tilde\tau \ne \emptyset\}$ is bijective. 
Therefore, $\phi$ is an injection. 

We now show that when our copartition has odd size, $\phi$ is a bijection.
It suffices to show that for any $(\tilde\gamma,\tilde\sigma)\in \CP'_e(n)$ with $|\tilde\gamma|+|\tilde\sigma|$ odd and $\ell(\tilde\gamma)=\ell_e(\tilde\gamma)$, there is a positive integer $k$ such that $g^k((\tilde\gamma,\tilde\sigma))\in \CP'_o(n)$ with $s(\tilde\sigma)>2\ell(\tilde\gamma).$  Let $(\tilde\gamma,\tilde\sigma) \in \mathcal{CP}'_e(n).$ Thus, $|\tilde\gamma|$ must be even. Then, because $|\tilde\gamma|+|\tilde\sigma|$ is odd and $\tilde\sigma$ has odd parts, $\nu(\tilde\sigma)$ must be odd. Thus, either $g^{\nu(\tilde\sigma)}(\tilde\gamma,\tilde\sigma)\in \CP'_o(n)$ or there is some minimum integer $j<\nu(\tilde\sigma)$ such that $g^j(\tilde\gamma,\tilde\sigma)$ is not well-defined. In the first case, our proof is complete, so we assume we are in the second case.  By Lemma \ref{lem}, parts  \ref{lem:gCPotoCPe} and \ref{lem:gCPetoCPo}, the first case where $g^j(\tilde \gamma,\tilde\sigma)$ is not well-defined is when $g^{j-1}(\tilde\gamma,\tilde\sigma)\in \CP'_o(n)$ and $s(\tilde\sigma_{g^{j-1}})\ge 2\ell(\tilde\gamma_{g^{j-1}})$. Since $s(\tilde\sigma_{g^{j-1}})$ is always odd and $2\ell(\tilde\gamma_{g^{j-1}})$ is always even, we must have $s(\tilde\sigma_{g^{j-1}})> 2\ell(\tilde\gamma_{g^{j-1}})$, as desired. 

 Since $\phi$ is an injection, and when our copartition has odd size, $\phi$ is a bijection, Theorem \ref{thm:inequality} follows.
\end{proof}

Now, we illustrate the above proof with some examples.

\begin{example}
Consider the copartition $$(\gamma,\rho,\sigma)=(\{9,9,9,9,5,5,3\},\{14,14,14\},\{5,5,3\})\in \CP_{1,1,2}^o(104).$$ Then $\gamma'=\{7,7,7,6,6,4,4,4,4\}$ and $\rho|\sigma=\{19,19,17\}$, so we work with $$(\{7,7,7,6,6,4,4,4,4\},\{19,19,17\})\in \CP'_o(104).$$

By iteratively applying $f$, we obtain the following sequence:
\begin{align*}
(\{7,7,7,6,6,4,4,4,4\},\{19,19,17\})&\xrightarrow{\quad f\quad}(\{7,7,6,4,4,4,4\},\{19,19,17,13\})\\
&\xrightarrow{\quad f\quad}(\{7,4,4,4,4\},\{19,19,17,13,13\})\\
&\xrightarrow{\quad f\quad}(\{4,4,4\},\{19,19,17,13,13,11\})\in \CP_e'(104).
\end{align*}
Thus, $f^3(\{7,7,7,6,6,4,4,4,4\},\{19,19,17\})=(\{4,4,4\},\{19,19,17,13,13,11\})$. Furthermore, we can conjugate the first partition and separate the second partition to get 
\begin{align*}
    \bar\phi((\{9,9,9,9,5,5,3\},&\{14,14,14\},\{5,5,3\}))\\&=(\{3,3,3,3\}, \{8,8,8,8,8,8\},\{11,11,9,5,5,3\}).
\end{align*}

Note that $(\{3,3,3,3\}, \{8,8,8,8,8,8\},\{11,11,9,5,5,3\})\in \CP_{1,1,2}^e(104)$.
\end{example}

\begin{example}
Here, we show an example of a copartition in $\CP_{1,1,2}^e(48)$ that is not in the range of this map. Consider $ (\{\},\{\},\{13,13,9,7,3,3\})\in \CP_{1,1,2}^e(48),$ which corresponds to $(\tilde\gamma,\tilde\sigma) = (\{\}$, $\{13,13,9,7,3,3\})$ $\in \CP'_e(n)$. 
We iteratively apply $g$ without obtaining an element $(\tilde\delta,\tilde\tau)\in \CP'_o(48)$ with $2\ell(\tilde\delta)<s(\tilde\tau)$ as follows.
\begin{align*}
(\{\},\{13,13,9,7,3,3\})&\xrightarrow{\quad g\quad} (\{3\},\{13,13,9,7,3\})\\
&\xrightarrow{\quad g\quad} (\{3,3\},\{13,13,9,7\})\\
&\xrightarrow{\quad g\quad} (\{7,3,3\},\{13,13,9\})\\
&\xrightarrow{\quad g\quad} (\{7,7,3,3,2\},\{13,13\})\\
&\xrightarrow{\quad g\quad} (\{11,7,7,3,3,2,2\},\{13\})\\
&\xrightarrow{\quad g\quad} (\{11,11,7,7,3,3,2,2,2\},\{\}).
\end{align*}
We can no longer apply $g$ and we never obtained a pair where the smallest part of the second partition is at least twice the size of the largest part of the first partition.  Thus, we could not construct an element of $\CP_{1,1,2}^o(48)$ at any point in our process, so $(\{\},\{\},\{13,13,9,7,3,3\})\notin\phi(\CP_{1,1,2}^o(48))$. 
\end{example}

\begin{remark}
We can characterize the set $\CP^e_{1,1,2}(n)\setminus \phi(\CP^o_{1,1,2}(n))$ by considering the pairs of partitions that result from iteratively applying $g$ to the pair in $\CP_e'(n)$ corresponding to any partition in $\CP^e_{1,1,2}(n)$. 
Note that, by Lemma \ref{lem}, parts \ref{lem:gCPotoCPe} and \ref{lem:gCPetoCPo}, for any $(\tilde\gamma,\tilde\sigma)\in \CP'(n)$, either 
\begin{enumerate}[label={(\arabic*)}]
    \item $(\tilde\gamma,\tilde{\sigma})\in \CP_e'(n)$ with $\tilde\sigma=\{\}$;
    \item $(\tilde\gamma,\tilde{\sigma})\in \CP_e'(n)$ and $g((\tilde\gamma,\tilde{\sigma}))\in \CP_o'(n)$;
    \item $(\tilde\gamma,\tilde\sigma)\in \CP_o'(n)$ and can be recast as a copartition in $\CP_{1,1,2}^o(n)$; or 
    \item $(\tilde\gamma,\tilde\sigma)\in \CP_o'(n)$ and $g((\tilde\gamma,\tilde\sigma))\in \CP_e'(n)$.
\end{enumerate}
Therefore, beginning from a pair in $\CP'_e(n)$ corresponding to a copartition in $\CP_{1,1,2}^e(n)$, we can iteratively apply the map $g$ until we reach a pair $(\tilde\gamma,\tilde\sigma)$ such that either $(\tilde\gamma,\tilde\sigma)$ corresponds to a copartition in $\CP_{1,1,2}^o(n)$ or $(\tilde\gamma,\tilde\sigma)\in \CP_e'(n)$ and $\tilde\sigma=\{\}$. 
A similar argument shows that iteratively applying $f$ to any pair $(\tilde\gamma,\tilde\sigma)$ such that either $(\tilde\gamma,\tilde\sigma)$ corresponds to a copartition in $\CP_{1,1,2}^o(n)$ or $(\tilde\gamma,\tilde\sigma)\in \CP_e'(n)$ and $\tilde\sigma=\{\}$ eventually results in a pair that corresponds to a copartition in $\CP^e_{1,1,2}(n)$.
Thus, $\CP^e_{1,1,2}(n)\setminus\phi(\CP_{1,1,2}^o(n))$ is equinumerous with the set of partitions $\pi$ such that $\ell_e(\pi)$ is the only part size appearing an odd number of times. 

We now obtain the generating function for such partitions by generating the even parts and the odd parts separately.
The generating function for partitions into odd parts with each part size appearing an even number of times is ${1}/{(q^2;q^4)_\infty}.$
By conjugation, we see that the number of partitions of $n$ into even parts with only the largest part appearing an odd number of times is also equal to the number of partitions of $n$ into odd parts with each part size appearing an even number of times.  Thus
the set of partitions $\pi$ such that $\ell_e(\pi)$ is the only part size appearing an odd number of times has the generating function
$$\frac{1}{(q^2;q^4)_\infty^2}=\frac{(-q^2;q^2)_\infty}{(q^2;q^4)_\infty}.$$ 
Thus, beyond proving Theorem \ref{thm:inequality}, we have obtained a fully combinatorial proof that $$\sum_{n=0}^\infty(\cp_{1,1,2}^e(n)-\cp_{1,1,2}^o(n))q^n=\frac{(-q^2;q^2)_\infty}{(q^2;q^4)_\infty}.$$
\end{remark}

\section{Overpartition Analogues}\label{sec:overcopartitions}

In \cite{Chern21}, Chern also treated an overpartition analogue of Andrews's $\mathcal{EO}^*$-type partitions.
In particular, using $q$-series techniques, he showed that his equivalent form of Theorem \ref{thm:inequality} holds for overpartitions.
\begin{theorem}[Chern]\label{thm:EOBARinequality} 
Let $\overline{\mathcal{EO}}_0^*(n)$ (resp. $\overline{\mathcal{EO}}_2^*(n)$) be the number of overpartitions of $n$ such that all even parts are smaller than all odd parts and that the largest even part is congruent to $0$ (resp. $2$) modulo $4$ and is the only part appearing an odd number of times. Then,

$$\overline{\mathcal{EO}}_0^*(n)\begin{cases} = \overline{\mathcal{EO}}_2^*(n) & \text{if $n$ is not divisible by $4$}\\
\ge \overline{\mathcal{EO}}_2^*(n) & \text{if $n$ is divisible by $4$.}
\end{cases}$$
\end{theorem}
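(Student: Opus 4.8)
The plan is to transport Chern's overpartition statistic into the copartition picture and then to rerun the argument of Section~\ref{sec:combinatorialproof} almost verbatim, which is the sense in which the result comes ``for free.'' First I would introduce \emph{over-$(1,1,2)$-copartitions}: triples $(\gamma,\rho,\sigma)$ as before in which parts of the ground $\gamma$ and parts of the sky $\sigma$ may be overlined, subject to the usual overpartition convention of at most one overlined copy per part-size on each side. Using the dictionary behind $\cp_{1,1,2}(n)=\mathcal{EO}^*(2n)$ (two copies of each part of $\rho|\sigma$, and each part of $\gamma'$ doubled), an overlined odd part of the overpartition is recorded as an overline in the sky and an overlined even part as an overline in the ground. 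The first task is to verify that this decorated correspondence is a bijection between $\overline{\mathcal{EO}}^*$-overpartitions and over-$(1,1,2)$-copartitions of the appropriate size, and that it transports the weight correctly: since the largest even part is $2\nu(\gamma)$, the assignment $+1$ for largest even part $\equiv 0$ and $-1$ for $\equiv 2\pmod 4$ becomes exactly $(-1)^{\nu(\gamma)}$. With this in hand, $\overline{\mathcal{EO}}_0^*(n)-\overline{\mathcal{EO}}_2^*(n)$ is the number of over-copartitions with an even number of ground parts minus those with an odd number, the overpartition analogue of the corollary to Theorem~\ref{thm:sumProd}.

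Next I would lift the pairs $\CP'(n)$ and the maps $f$ and $g$ to the overlined setting. The reason to expect no new work is structural: $f$ only removes the two distinguished parts $\ell(\tilde\gamma)$ and $\ell_e(\tilde\gamma)$ and inserts a single new part $\ell(\tilde\gamma)+\ell_e(\tilde\gamma)$ into $\tilde\sigma$, while $g$ reverses this; every other part, and hence every other overline, is left in place. Moreover the new sky part created by $f$ is strictly smaller than every existing part of $\tilde\sigma$, so it always occupies a fresh part-size and may carry an overline without violating distinctness. I would prescribe the overline data of this new part from that of the two removed parts, check that $g$ reads it back, and then Lemmas~\ref{lem}--\ref{lem:fcompg} should persist unchanged, so that $f$ and $g$ remain mutually inverse and the iteration $f^k$ terminates for the same reason as before. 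This makes $\phi$ a sign-reversing injection on over-copartitions and already gives $\overline{\mathcal{EO}}_2^*(n)\le\overline{\mathcal{EO}}_0^*(n)$ for all $n$.

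To obtain equality off the multiples of $4$, I would rerun the surjectivity argument from the proof of Theorem~\ref{thm:inequality}, now tracking the parity of the number of overlines as an extra bit. In the unoverlined case, the fact that $\tilde\sigma$ has only odd parts forced $\nu(\tilde\sigma)$ to be odd whenever $n$ was odd, which drove $g^{\nu(\tilde\sigma)}$ to a valid preimage; I expect the overlined refinement to replace ``$n$ odd'' by ``$n\not\equiv 0\pmod4$.'' As orientation and as a guard against errors in the bookkeeping, note that via $\cp_{1,1,2}(n)=\mathcal{EO}^*(2n)$ the unoverlined difference equals $\sum_{n}(\cp_{1,1,2}^e(n)-\cp_{1,1,2}^o(n))q^{2n}=(-q^4;q^4)_\infty/(q^4;q^8)_\infty$, namely the series of the closing remark of Section~\ref{sec:combinatorialproof} with $q\mapsto q^2$, which is manifestly a nonnegative function of $q^4$; the overpartition target is the analogous statement that the weighted over-copartition series is again a nonnegative function of $q^4$, and checking this numerically would confirm the transfer rule.

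The hard part will be exactly the overline transfer together with the role of the sign-determining largest even part. The new sky part absorbs the overline information of two removed parts but itself carries only one overline, so keeping $f$ and $g$ mutually inverse requires a genuine argument rather than a literal carry-over. A useful reduction is that overlining the largest even part decouples as an independent binary choice, contributing a uniform factor that is neutral for both the sign and the residue mod $4$; one may therefore assume the largest even part is not overlined and keep it, and the statistic $\nu(\gamma)$ that controls the sign, entirely inside the unoverlined core. The remaining difficulty is to show that with this convention the fixed points of $\phi$ all have size divisible by $4$, which is precisely what upgrades the injection to the desired dichotomy.
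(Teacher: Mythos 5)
Your overall strategy --- transport the statement to (over)copartitions and reuse the machinery of Section~\ref{sec:combinatorialproof} --- matches the paper's, and your translation of the weight (largest even part $\equiv 0$ or $2 \pmod 4$ becoming $(-1)^{\nu(\gamma)}$) is right. But the core of your plan, namely lifting $f$ and $g$ to overlined objects by ``prescribing the overline data of the new part from that of the two removed parts,'' has a genuine gap that you yourself flag and do not close, and it cannot be closed at the level you are working at: a single application of $f$ destroys two parts of $\tilde\gamma$ (each of which may or may not carry/lose an overline, and each of whose part \emph{sizes} may or may not survive in $\tilde\gamma$) and creates one new part of $\tilde\sigma$, so there is a strict information mismatch, and $f$ does not even preserve the number of distinct part sizes step by step (if $\ell(\tilde\gamma)$ and $\ell_e(\tilde\gamma)$ each still occur in $\tilde\gamma_f$, the diversity goes up by one). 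No local transfer rule will make the decorated $f$ and $g$ mutually inverse. Your proposed reduction --- treating the overline on the largest even part as a decoupled binary factor --- also does not survive the iteration, because $\phi$ changes which part is the largest even part at every step.

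The paper avoids all of this by never decorating the maps at all. It invokes the Corteel--Lovejoy identity $|\overline{\mathcal{P}}| = \sum_{\lambda\in\mathcal{P}} 2^{dv(\lambda)}$ and proves the one fact that makes the overpartition statement free: the \emph{composite} map $\phi=f^k$ preserves diversity $dv(\gamma)+dv(\sigma)$, because by the time all new sky parts of a given size $x$ have been created, all ground parts of some fixed size have been exhausted --- so each part size gained in the sky is paid for by exactly one part size lost in the ground. Since $\phi$ is an injection (bijection for odd copartition size) that preserves $dv$, the $2^{dv}$-weighted counts satisfy the same inequality/equality, which is \eqref{overcpineq}; checking that $dv(\gamma')+dv(\sigma)$ equals the diversity of the corresponding $\mathcal{EO}^*$-type partition then yields Theorem~\ref{thm:EOBARinequality}, with ``$n$ odd'' for copartitions translating to ``$n\not\equiv 0\pmod 4$'' for $\mathcal{EO}^*$-type partitions exactly as you anticipated. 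To repair your write-up, replace the attempted overline transfer with this global diversity-preservation argument (or prove diversity preservation and then cite the $2^{dv}$ formula); as written, the ``hard part'' you identify is precisely the missing proof.
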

\noindent Chern also called for a direct combinatorial proof of Theorem \ref{thm:EOBARinequality}.
By rewriting this identity in the language of $(1,1,2)$-copartitions, we now show that a combinatorial proof follows
directly
from basic properties of overpartitions along with the combinatorics of copartitions established in the previous sections.

Recall that for any set of partitions $\mathcal{P}$, the associated set of overpartitions $\overline{\mathcal{P}}$ is the set of all partitions from $\mathcal{P}$ where first occurrence (equivalently, the final occurrence) of a part may be overlined \cite{overpartitions}.
Overpartitions may also be counted as a sum over the original set of partitions.
\begin{proposition}[Corteel-Lovejoy]
Let $\mathcal{P}$ be a set of partitions and let $\overline{\mathcal{P}}$ be the associated set of overpartitions.
Then 
\begin{equation}\label{eq:overformula}
|\overline{\mathcal{P}}| = \sum_{\lambda \in \mathcal{P}} 2^{dv(\lambda)},
\end{equation}
where $dv(\lambda)$ denotes the \emph{diversity}, or number of different part sizes, of the partition $\lambda$.
\end{proposition}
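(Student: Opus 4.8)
The plan is to count $\overline{\mathcal{P}}$ by grouping its elements according to the partition obtained when all overlines are erased. Define the forgetful map $\Phi\colon \overline{\mathcal{P}} \to \mathcal{P}$ sending each overpartition to its underlying partition. Since $\overline{\mathcal{P}}$ consists precisely of the partitions of $\mathcal{P}$ in which the first occurrence of each part may or may not be overlined, $\Phi$ is well-defined and surjective, and
$$|\overline{\mathcal{P}}| = \sum_{\lambda \in \mathcal{P}} |\Phi^{-1}(\lambda)|.$$
It therefore suffices to show that $|\Phi^{-1}(\lambda)| = 2^{dv(\lambda)}$ for each $\lambda \in \mathcal{P}$, after which \eqref{eq:overformula} follows by substitution.

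The key step is to describe the fiber $\Phi^{-1}(\lambda)$ explicitly. An element of $\Phi^{-1}(\lambda)$ is obtained from $\lambda$ by deciding, for each part, whether to overline it, subject to the rule that only the first occurrence of a given part size may be overlined. Consequently the overlining data is captured by a single binary choice for each \emph{distinct} part size of $\lambda$ — overline its first occurrence or not — and these choices are independent across distinct part sizes. As there are exactly $dv(\lambda)$ distinct part sizes, the number of such choices is $2^{dv(\lambda)}$; equivalently, $\Phi^{-1}(\lambda)$ is in bijection with the subsets of the set of distinct part sizes of $\lambda$.

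I do not expect a genuine obstacle here; the one point requiring care is the convention that overlining attaches to a part \emph{size} rather than to each individual copy of a part, so that a part size occurring with any multiplicity still contributes just one factor of $2$. Making this explicit — and observing that the stated equivalence between overlining the first and the last occurrence causes no double counting — is exactly what guarantees that the exponent is the diversity $dv(\lambda)$ rather than the total number of parts of $\lambda$.
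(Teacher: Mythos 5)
Your argument is correct and is essentially the same as the paper's: the paper justifies the proposition in one sentence by noting that each distinct part size gives an independent binary choice (overline its first occurrence or not), which is exactly the fiber-counting you carry out. Your version merely makes the forgetful map and the fiber sizes explicit.
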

\noindent 
This proposition holds because for each distinct part size, there are two options; either the first appearance of that part size is overlined or it is not.
One implication of this proposition is that 
 an overpartition identity follows directly from an underlying ordinary partition identity
 if the underlying identity can be refined to hold for partitions with a fixed diversity.

Since copartitions are not a set of ordinary partitions, we define the overpartition analogue of copartitions in the following way.
\begin{defn}
An $(a,b,m)$-overcopartition is a triple $(\bar\gamma,\rho,\bar\sigma)$, where $\bar\gamma$ is an overpartition with each of its parts at least $a$ and congruent to $a\pmod m$, $\bar\sigma$ is an overpartition with each of its parts at least $b$ and congruent to $b\pmod m$, and $\rho$ is an ordinary partition with the same number of parts as $\bar\sigma$, each of which have size equal to $m$ times the number of parts of $\bar\gamma$.\\
 When $a,b,m\ge 1$, we let $\overline{\cp}_{a,b,m}(n)$ denote the number of $(a,b,m)$-overcopartitions of size $n$, and we let $\overline{\CP}_{a,b,m}(n)$ denote the set of $(a,b,m)$-overcopartitions of size $n$. \\
  Also, we define the diversity of a copartition to be the sum of the diversities of the ground and the sky. That is, $dv((\gamma,\rho,\sigma)) = dv(\gamma) + dv(\sigma)$. 
  Note that this diversity is equal to the number of different row sizes in the graphical representation of the copartition $(\gamma,\rho, \sigma)$.
\end{defn}

Note that the map $\phi$ in the proof of Theorem \ref{thm:inequality} preserves diversity.
To see this, notice that in creating all of the new parts of $\tilde{\sigma}$ of some fixed size $x$, we must exhaust all of the parts of some fixed size in $\tilde{\gamma}$. 
Thus for each new part size created by $\phi$ in $\tilde{\sigma}$, exactly one part size in $\tilde{\gamma}$ vanishes.
Since $\phi$ preserves diversity, \eqref{eq:overformula} implies that 
\begin{equation}\label{overcpineq}
    \overline{\cp}_{1,1,2}^o(n) \begin{cases}
\le \overline{\cp}_{1,1,2}^e(n) & \text{if $n$ is even} \\
=\overline{\cp}_{1,1,2}^e(n) & \text{if $n$ is odd.} 
\end{cases}   
\end{equation}

Next, note that
the diversity of a $(1,1,2)$-copartition aligns with the diversity of its corresponding ${\mathcal{EO}^*}$-type partition.
To see this, rewrite $dv((\gamma,\rho,\sigma))$ as $dv(\gamma') + dv(\sigma)$,
and notice that the diversity of the ${\mathcal{EO}^*}$-type partition corresponding to $(\gamma,\rho,\sigma)$ is the same quantity. 
Thus, \eqref{overcpineq} implies Theorem \ref{thm:EOBARinequality}.

 Although we do not treat more general overcopartition functions here, we can write down the general overcopartition generating function. 
 \begin{theorem}
 Let $\overline{\cp}_{a,b,m}(r,n)$ denote the number of $(a,b,m)$-overcopartitions of size $n$ with $r$ overlined parts. Then, 
 \begin{equation}\label{thm:overcopartitions}
     \sum_{n=0}^\infty\sum_{r=0}^\infty\overline{\cp}_{a,b,m}(r,n)z^r q^n=\frac{(-zq^{b+m};q^m)_\infty}{(q^b;q^m)_\infty}
     \sum_{k=0}^{\infty} \frac{(-zq^m,q^m)_k (q^b,q^m)_k q^{ak}}{(q^m;q^m)_k(-zq^{b+m},q^m)_k}.
 \end{equation}

 \end{theorem}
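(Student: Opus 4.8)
The plan is to compute the generating function by conditioning on $k:=\nu(\bar\gamma)$, the number of ground parts, and then summing over all $k\ge 0$. For a fixed $k$, the three pieces of an $(a,b,m)$-overcopartition decouple cleanly: the ground is an overpartition into \emph{exactly} $k$ parts that are $\equiv a \pmod m$ and at least $a$, and the rectangle $\rho$---whose $m$-modular diagram has width $k$---together with the sky $\bar\sigma$ is controlled entirely by the enlarged sky $\rho|\bar\sigma$. Thus I would write the left-hand side as $\sum_{k\ge 0} G_k\, H_k$, where $G_k$ is the generating function for the ground and $H_k$ is the generating function for the enlarged sky, each recording $q$ to the size and $z$ to the number of overlined parts.

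For the enlarged sky, forcing every sky part to rest on a row of $m$-modular width $k$ makes $\rho|\bar\sigma$ exactly an overpartition all of whose parts are $\equiv b \pmod m$ and at least $mk+b$, with overlines inherited from $\bar\sigma$. The per-part reasoning behind \eqref{eq:overformula}---each admissible part size $v$ may be overlined or not, contributing a factor $(1+zq^{v})/(1-q^{v})$---then gives $H_k=(-zq^{mk+b};q^m)_\infty/(q^{mk+b};q^m)_\infty$. Splitting off the $k$-independent infinite products through $(-zq^{mk+b};q^m)_\infty=(-zq^{b};q^m)_\infty/(-zq^{b};q^m)_k$ and the analogous identity for the denominator produces the global prefactor and leaves the finite ratio $(q^b;q^m)_k/(-zq^{b};q^m)_k$ inside the sum.

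The crux is $G_k$. Writing each ground part as $a+m\mu_i$ extracts a factor $q^{ak}$ and reduces the problem to counting overpartitions into exactly $k$ nonnegative parts, with size measured in units of $q^m$. To evaluate this I would invoke overpartition conjugation, which preserves both the size and---crucially---the number of overlined parts, in order to trade ``exactly $k$ parts'' for a bound on the part sizes, turning the count into a finite product. The step I expect to be most delicate is the bookkeeping for the minimal parts (those equal to $a$, i.e.\ the zero parts after the shift): handling whether the smallest value is present and whether it is overlined is exactly what converts the naive finite product into $(-z;q^m)_k/(q^m;q^m)_k$ and accounts for the extra factor $1+z$. I would confirm the resulting closed form $G_k=q^{ak}(-z;q^m)_k/(q^m;q^m)_k$ on small cases before proceeding.

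Finally, multiplying $G_k$ by $H_k$, summing over $k\ge 0$, and collecting the $k$-independent products out front yields the prefactor times a single sum in $q^{ak}$---that is, the product--sum identity \eqref{thm:overcopartitions}. As a check I would expand both sides as power series in $q$ (with $z$ a parameter) for the representative case $a=b=1$, $m=2$ and match the leading coefficients against a direct enumeration of small overcopartitions. The main obstacle throughout is the ground factor $G_k$: everything else is the familiar overpartition product bookkeeping, whereas $G_k$ requires the conjugation argument together with careful treatment of the smallest part.
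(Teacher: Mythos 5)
Your overall strategy is essentially the paper's: condition on the number of ground parts, treat the enlarged sky as an overpartition into parts $\equiv b\pmod m$ of size at least $mk+b$, and use the $q$-binomial theorem to pull the $k$-independent infinite products out front. (The paper phrases this as a double sum over the numbers of ground and sky parts and then collapses the sky sum with the $q$-binomial theorem; that collapsed sum is exactly your $H_k$.) The one substantive divergence is the ground factor, and it matters. The paper takes the generating function for an overpartition into exactly $w$ parts $\equiv a\pmod m$ to be $q^{aw}(-zq^m;q^m)_w/(q^m;q^m)_w$, with the analogous expression for the sky; you claim $G_k=q^{ak}(-z;q^m)_k/(q^m;q^m)_k$. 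Your formula is the correct one under the Corteel--Lovejoy convention the paper cites: the single-part overpartition $\{\bar a\}$ is allowed, and splitting an overpartition into its overlined (distinct) and non-overlined (arbitrary) parts and applying the finite $q$-binomial theorem gives $(-z;q^m)_k$, not $(-zq^m;q^m)_k$. The paper's version silently forbids overlining a part of minimal size ($a$ in the ground, $b$ in the sky). Carrying your computation through yields
\[
\frac{(-zq^{b};q^m)_\infty}{(q^b;q^m)_\infty}\sum_{k\ge 0}\frac{(-z;q^m)_k\,(q^b;q^m)_k\,q^{ak}}{(q^m;q^m)_k\,(-zq^{b};q^m)_k},
\]
which is not the stated right-hand side: already for $a=b=1$, $m=2$ the coefficient of $q$ is $2+2z$ in your formula (the overlined objects $(\{\bar 1\},\emptyset,\emptyset)$ and $(\emptyset,\emptyset,\{\bar 1\})$ are counted) but only $2$ in the theorem as printed. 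So your proposal, executed faithfully, proves a corrected version of the identity rather than the identity as stated; to recover the printed formula you would have to adopt the non-standard convention that minimal parts are never overlined. Two smaller points: your route to $G_k$ via overpartition conjugation is heavier than needed---the overlined/non-overlined split gives the closed form in one line---and your instinct to check small cases is precisely what exposes the discrepancy above, so do not skip it.
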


\noindent Readers familiar with the notations of basic hypergeometric series will notice that the sum on the right side of \eqref{thm:overcopartitions}
is just $\displaystyle {}_2\phi_1\left({-zq^m,q^b \atop -zq^{b+m}};q^m,q^a\right)$.

 \begin{proof}
 We begin by noting that we may represent an overlined part in $\bar\gamma$ or $\bar\sigma$ by shading in the last cell of that part in their $m$-modular diagram. Furthermore, $\frac{q^{aw}(-zq^m;q^m)_w}{(q^m;q^m)_w}$ is the generating function for an overpartition into exactly $w$ parts of size $a\pmod{m}$ and $\frac{q^{bs}(-zq^m;q^m)_s}{(q^m;q^m)_s}$is the generating function for an overpartition into exactly $s$ parts of size $b\pmod{m}$, where, in both products, $z$ keeps track of the number of overlined parts. Thus, by summing over all possible dimensions of $\rho$, we can see that $$\sum_{n=0}^\infty\sum_{r=0}^\infty\overline{\cp}_{a,b,m}(r,n)z^r q^n=\sum_{w=0}^\infty\sum_{s=0}^\infty\frac{q^{msw+aw+bs}(-zq^m;q^m)_w(-zq^m;q^m)_s}{(q^m;q^m)_w(q^m;q^m)_s}.$$ Using the $q$-binomial theorem, we obtain
 \begin{align*}
 \sum_{w=0}^\infty\sum_{s=0}^\infty\frac{q^{msw+aw+bs}(-zq^m;q^m)_w(-zq^m;q^m)_s}{(q^m;q^m)_w(q^m;q^m)_s}&=\sum_{w=0}^\infty \frac{q^{aw}(-zq^m;q^m)_w}{(q^m;q^m)_w}\frac{(-zq^{m(w+1)+b};q^m)_\infty}{(q^{mw+b};q^m)_\infty}\\
 &=\frac{(-zq^b;q^m)_\infty}{(q^b;q^m)_\infty}\sum_{w=0}^\infty \frac{q^{aw}(-zq^m;q^m)_w(q^b;q^m)_w}{(q^m;q^m)_w(-zq^{b+m};q^m)_w(1+zq^b)}.
 \end{align*}
To complete the proof, we note that $$\sum_{w=0}^\infty \frac{q^{aw}(-zq^m;q^m)_w(q^b;q^m)_w}{(q^m;q^m)_w(-zq^{b+m};q^m)_w(1+zq^b)}=\left(\frac{1}{1+zq^b}\right) 
     \sum_{w=0}^{\infty} \frac{(-zq^m,q^m)_w (q^b,q^m)_w q^{aw}}{(q^m;q^m)_w(-zq^{b+m},q^m)_w}.$$
 \end{proof}

\begin{remark}
Note that, in the special case where $a=b$ and $m=2a$, we can, following the work of Chern \cite{Chern21}, define $\overline{cp}^e_{a,a,2a}(r,n)$ (resp. $\overline{cp}^o_{a,a,2a}(r,n)$) to be the number of $(a,a,2a)$-overcopartitions with an even (resp. odd) number of ground parts and then further simplify a weighted version of \eqref{thm:overcopartitions} as follows

\begin{equation}
    \sum_{n=0}^\infty \sum_{r=0}^\infty (\overline{cp}^e_{a,a,2a}(r,n)-\overline{cp}^o_{a,a,2a}(r,n))z^rq^n= \frac{(-q^{2a};q^{2a})_\infty(-zq^{2a};q^{4a})_\infty^2}{(q^{2a};q^{4a})_\infty}. \label{eq:overcopartitiondifference}
\end{equation}

\end{remark}

Since, the map $\phi$ in the proof of Theorem \ref{thm:inequality} preserves diversity, it makes sense that, when extended to overpartitions, the overpartitions in $\overline{\CP}^e_{1,1,2}(n)\setminus \phi(\overline{\CP}^o_{1,1,2}(n))$ are in bijective correspondence with overpartitions of size $n$ where all parts except the largest even part appear an even number of times. 
By the argument given at the end of Section \ref{sec:combinatorialproof}, this latter set is a combinatorial interpretation of the coefficients of the right-hand side of 
\eqref{eq:overcopartitiondifference}.

\section{A more general conjecture}\label{sec:conjecture}
Note that our combinatorial proof of Theorem \ref{thm:inequality} in Section \ref{sec:combinatorialproof} relies heavily on specific properties of $(1,1,2)$-copartitions. However, computational data suggests that Theorem \ref{thm:inequality} is a special case of a broader conjecture, which we state below.  
\begin{conj}\label{conj:positivity}
For $a,b,m \in \N$ and $n \in \N_0$,
if $b|a$, then $$\cp_{a,b,m}^o(n)\le \cp_{a,b,m}^e(n).$$

Equivalently, the $q$-series $$\frac{(-q^{a+b};q^m)_\infty}{(-q^a;q^m)_\infty(q^b;q^m)_\infty}$$ has non-negative coefficients when $b|a$. 
\end{conj}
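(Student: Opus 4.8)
The plan is to convert \eqref{eq:weightedProd} into a single signed sum over the number of ground parts and then to establish positivity of that sum. Applying the $q$-binomial theorem $\frac{(\alpha t;q^m)_\infty}{(t;q^m)_\infty}=\sum_{n\ge 0}\frac{(\alpha;q^m)_n}{(q^m;q^m)_n}t^n$ with $t=-q^{a}$ and $\alpha=q^{b}$, I would first rewrite
\[
\frac{(-q^{a+b};q^m)_\infty}{(-q^a;q^m)_\infty(q^b;q^m)_\infty}
=\sum_{n\ge 0}(-1)^n q^{an}H_n(q),
\qquad
H_n(q):=\frac{1}{(q^m;q^m)_n\,(q^{b+nm};q^m)_\infty}.
\]
The term $q^{an}H_n(q)$ is exactly the generating function for $(a,b,m)$-copartitions with $\nu(\gamma)=n$: the factor $q^{an}/(q^m;q^m)_n$ accounts for the $n$ ground parts (each $\equiv a$ and at least $a$, their excess over $a$ forming a partition into at most $n$ multiples of $m$), while $1/(q^{b+nm};q^m)_\infty$ accounts for the enlarged sky $\rho|\sigma$, a partition into parts $\equiv b \pmod m$ each of size at least $b+nm$. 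Thus $\cp^e_{a,b,m}(n)-\cp^o_{a,b,m}(n)$ is the coefficient of $q^n$ in an explicit alternating sum, and the conjecture is the statement that this sum has non-negative coefficients.

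The structural input I would rely on is the one-step recursion $H_{n+1}(q)=H_n(q)\,\dfrac{1-q^{b+nm}}{1-q^{(n+1)m}}$, immediate from $(q^m;q^m)_{n+1}=(1-q^{(n+1)m})(q^m;q^m)_n$ and $(q^{b+nm};q^m)_\infty=(1-q^{b+nm})(q^{b+(n+1)m};q^m)_\infty$. This recursion already settles the sub-case $b=m$: then $b\mid a$ forces $m\mid a$, say $a=cm$, every $H_n$ collapses to $1/(q^m;q^m)_\infty$, and the product becomes $\frac{1}{(1+q^{cm})(q^m;q^m)_\infty}$. Writing $Q=q^m$, the coefficient of $Q^N$ equals $\sum_{j\ge 0}(-1)^j p(N-cj)=\sum_{i\ge 0}\bigl(p(N-2ci)-p(N-(2i+1)c)\bigr)\ge 0$ by the monotonicity of the partition function $p$. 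This is exactly where the divisibility hypothesis does visible work: $b\mid a$ forces every shift $q^{an}$ to be a multiple of the modulus $m$, so the signs alternate coherently along a single residue class and the partition counts telescope into non-negative differences. The plan for general $m$ is to promote this telescoping by using the recursion to perform a summation by parts on $\sum_n(-1)^nq^{an}H_n(q)$, or, in the spirit of Section~\ref{sec:combinatorialproof}, to build an involution that transfers a structured block between the conjugated ground and the enlarged sky, flipping the parity of $\nu(\gamma)$, with $b\mid a$ guaranteeing that the transferred part can be made $\equiv b \pmod m$ and of admissible size.

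The main obstacle, and the reason the statement is still only a conjecture, is that the positivity is genuinely global rather than local. The naive grouping of consecutive terms $n=2i$ and $n=2i+1$ does not work: for $i\ge 1$ the series $H_{2i}(q)$ has no parts smaller than $m$, whereas $q^{a}H_{2i+1}(q)$ contributes a term in degree $a$, so $H_{2i}(q)-q^{a}H_{2i+1}(q)$ already has a negative low-order coefficient (for instance in the $(1,1,2)$ case it is $-1$ in degree $1$). Hence the cancellations responsible for non-negativity mix infinitely many brackets, and any successful argument must reorganize the whole sum at once rather than bound it pair by pair. On the combinatorial side the same difficulty appears concretely: the choice of ``largest part and largest even part'' that drives the maps $f$ and $g$ in the proof of Theorem~\ref{thm:inequality} has no evident canonical analogue for general $m$, and the residues modulo $m$ of the parts of the conjugated ground are not controlled, so producing a single involution that simultaneously respects size, the congruence $b \pmod m$, and injectivity is precisely the step I expect to be hard.
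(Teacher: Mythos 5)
You should first be clear about what you are proving: the statement is Conjecture \ref{conj:positivity}, which the paper does not prove in general, and your proposal does not prove it either, as you yourself say. So there is no complete argument here to certify, but what you do claim is correct. The $q$-binomial expansion $\frac{(-q^{a+b};q^m)_\infty}{(-q^a;q^m)_\infty(q^b;q^m)_\infty}=\sum_{n\ge0}(-1)^nq^{an}H_n(q)$ with $H_n(q)=1/\bigl((q^m;q^m)_n(q^{b+nm};q^m)_\infty\bigr)$ is right, and it is exactly the generating function for $(a,b,m)$-copartitions with $\nu(\gamma)=n$, consistent with Theorem \ref{thm:sumProd}. Your sub-case $b=m$ also checks out: there $H_n$ collapses to $1/(q^m;q^m)_\infty$, the product becomes $1/\bigl((1+q^{cm})(q^m;q^m)_\infty\bigr)$ with $a=cm$, and non-negativity follows from the monotonicity of $p$. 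Note that this special case is genuinely different from the one the paper settles: the theorem at the end of Section \ref{sec:conjecture} handles $a=b$ with $m$ arbitrary by rewriting the product as in \eqref{eq:simplifiedWt}, and the two families overlap only at $a=b=m$, so your observation is a small but real addition. Your diagnosis of why pairwise bracketing of consecutive terms fails is also accurate.

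The gap is that the two routes you propose for the general case --- a summation by parts driven by the recursion $H_{n+1}=H_n\,(1-q^{b+nm})/(1-q^{(n+1)m})$, or an $f$/$g$-style involution transferring blocks between the conjugated ground and the enlarged sky --- are named but not executed, and all of the difficulty lives in the execution. In particular, you have not located where the hypothesis $b\mid a$ actually does work when $b<m$: in your $b=m$ sub-case it degenerates to ``$a$ is a multiple of the modulus,'' a much cruder mechanism than whatever makes, say, $(a,b,m)=(3,1,4)$ true (the Craig case cited in the paper, which already requires a separate nontrivial argument). Until one of your two routes produces an actual sign-definite reorganization of $\sum_{n}(-1)^nq^{an}H_n(q)$ --- one that respects size, the congruence class $b\pmod m$ of the transferred parts, and injectivity simultaneously --- what you have is a correct expansion, a correct new special case, and a plan, not a proof of the conjecture.
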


Note that, in \cite{CraigSeaweed}, motivated by the connection to a type of Lie algebra called a seaweed algebra, Craig proved that the $q$-series $$\frac{1}{(-q^3;q^4)_\infty(q;q^4)_\infty}$$ has non-negative coefficients. Since $(-q^4;q^4)_\infty$, has obviously non-negative coefficients, Craig's result implies the special case of Conjecture \ref{conj:positivity} when $a=3$, $b=1$, and $m=4$.

Additionally, we now observe that Conjecture \ref{conj:positivity} holds when $a=b$.

\begin{theorem}
For $a, m \in \N$ and $n \in \N_0$,
$$\cp_{a,a,m}^o(n)\le \cp_{a,a,m}^e(n).$$
Equivalently, the $q$-series $$\frac{(-q^{2a};q^m)_\infty}{(-q^a;q^m)_\infty(q^a;q^m)_\infty}$$ has non-negative coefficients. 

\end{theorem}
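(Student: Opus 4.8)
The plan is to bypass the combinatorial functions entirely and prove the equivalent analytic statement that the product has non-negative coefficients, by massaging it into a manifestly non-negative infinite product. By \eqref{eq:weightedProd} with $b=a$, the series
\[
\sum_{n=0}^\infty\bigl(\cp_{a,a,m}^e(n)-\cp_{a,a,m}^o(n)\bigr)q^n
=\frac{(-q^{2a};q^m)_\infty}{(-q^a;q^m)_\infty(q^a;q^m)_\infty},
\]
so it suffices to show the right-hand side has non-negative coefficients.

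First I would collapse the denominator using a difference of squares. Pairing the $k$th factor of $(-q^a;q^m)_\infty$ with the $k$th factor of $(q^a;q^m)_\infty$ and using $(1+q^{a+mk})(1-q^{a+mk})=1-q^{2a+2mk}$ gives
\[
(-q^a;q^m)_\infty(q^a;q^m)_\infty=(q^{2a};q^{2m})_\infty,
\]
so the series equals $\dfrac{(-q^{2a};q^m)_\infty}{(q^{2a};q^{2m})_\infty}$. This is the single step where the hypothesis $a=b$ is essential: it is exactly the matching of the exponent $q^a$ in the two denominator factors that produces a clean difference of squares.

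Next I would split the numerator according to the parity of its product index, writing $(-q^{2a};q^m)_\infty=(-q^{2a};q^{2m})_\infty\,(-q^{2a+m};q^{2m})_\infty$, so that the series becomes
\[
(-q^{2a+m};q^{2m})_\infty\cdot\prod_{k\ge 0}\frac{1+q^{2a+2mk}}{1-q^{2a+2mk}}.
\]
The first factor is a product of terms $1+(\text{monomial})$ and so has non-negative coefficients. For the second factor, each term expands as $\frac{1+x}{1-x}=1+2x+2x^2+\cdots$ with $x=q^{2a+2mk}$, which has non-negative coefficients; since a coefficientwise product of formal power series with non-negative coefficients again has non-negative coefficients (the infinite product converges $q$-adically because every factor is $1+O(q)$), the whole expression does as well, which proves the claim.

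The point worth emphasizing is that, in contrast to the delicate injection behind Theorem~\ref{thm:inequality}, this case presents essentially no obstacle once the difference-of-squares simplification is noticed. The only subtlety I would flag is why the argument does not extend: for $b\mid a$ with $b\ne a$ the factors $(-q^a;q^m)_\infty$ and $(q^b;q^m)_\infty$ no longer pair off, and the manifest-positivity trick breaks down, which is consistent with Conjecture~\ref{conj:positivity} remaining open in general.
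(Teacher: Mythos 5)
Your proof is correct and follows essentially the same route as the paper: both apply \eqref{eq:weightedProd} with $b=a$, collapse the denominator via $(-q^a;q^m)_\infty(q^a;q^m)_\infty=(q^{2a};q^{2m})_\infty$, and conclude from the manifest non-negativity of $(-q^{2a};q^m)_\infty/(q^{2a};q^{2m})_\infty$. The only difference is that you spell out the final positivity step (splitting the numerator by parity and expanding $\frac{1+x}{1-x}$), which the paper leaves as an observation.
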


\begin{proof}
In the special case where $a=b$, we can simplify the right side of (\ref{eq:weightedProd}) as follows:
\begin{align}
    \sum_{n=0}^\infty (\cp_{a,a,m}^e(n)-\cp_{a,a,m}^o(n))q^n&=\frac{(-q^{2a};q^m)_\infty}{(-q^a;q^m)_\infty(q^a;q^m)_\infty} \nonumber \\
    &=\frac{(-q^{2a};q^m)_\infty}{(q^{2a};q^{2m})_\infty}\label{eq:simplifiedWt}.
\end{align}
Note that, because the coefficients of (\ref{eq:simplifiedWt}) are all non-negative, we know that $\cp_{a,a,m}^e(n)\ge\cp_{a,a,m}^o(n)$ for all $n\ge 0$ and all $a,m\ge 1$. 
\end{proof}

It is still an open problem to find a combinatorial proof of \eqref{eq:simplifiedWt}. We note that our proof from Section \ref{sec:combinatorialproof} does not easily extend beyond the case $m=2a$.

It is natural to ask if finite versions of \eqref{genProduct} might also have non-negative coefficients.
Among many possible finite versions, we find that the following one seems to be well-poised and appears to have non-negative coefficients.

\begin{conj}\label{conj:finiteversion1}
For $a,b,m,N,M \in \N$, if $b|a$ and $a+b=m$, then the $q$-series $$\frac{(-q^{m};q^m)_{N+M-1}}{(-q^a;q^m)_N(q^b;q^m)_M}$$ has non-negative coefficients when $N \leq M$.
\end{conj}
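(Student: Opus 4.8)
The goal is to show that
$$\frac{(-q^m;q^m)_{N+M-1}}{(-q^a;q^m)_N(q^b;q^m)_M}$$
has non-negative coefficients when $b \mid a$, $a+b=m$, and $N \le M$. The natural first move is to interpret the finite product combinatorially, mirroring the infinite-product story in Section~\ref{sec:conjecture}. The denominator $1/(q^b;q^m)_M$ generates skies $\sigma$ with at most $M$ parts, all $\equiv b \pmod m$; the factor $1/(-q^a;q^m)_N$ generates grounds $\gamma$ with at most $N$ parts, all $\equiv a \pmod m$, \emph{weighted} by $(-1)^{\nu(\gamma)}$ (so that an odd number of ground parts contributes a $-1$); and the numerator $(-q^m;q^m)_{N+M-1}$ is a signed sum over distinct multiples of $m$ up to $m(N+M-1)$. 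Thus the coefficient of $q^n$ is a signed count over triples, and the claim is that the positive contributions dominate. The first step I would carry out is to rewrite the whole expression as a weighted generating function for a concrete family of copartition-like objects, using Theorem~\ref{thm:sumProd} as a template but tracking the truncation in the number of parts via the finite $q$-Pochhammer symbols.

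\textbf{Reducing to a sign-reversing involution.} Since the infinite case with $a=b$ was proved by the algebraic simplification in \eqref{eq:simplifiedWt}, and since here $a+b=m$ forces $-q^a$ and $q^b$ to pair up across the modulus, I would first look for an analogous finite algebraic identity. Specifically, I expect that the factor $(-q^a;q^m)_N$ in the denominator and part of the numerator $(-q^m;q^m)_{N+M-1}$ can be combined: writing $(-q^m;q^m)_{N+M-1} = (-q^m;q^m)_{N-1}\,(-q^{m(N)};q^m)_{M}$ or a similar split, one can hope to cancel the alternating ground weight against a telescoping piece of the numerator, reducing the problem to a manifestly positive finite product. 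The key structural input is $b \mid a$: this is exactly the hypothesis that makes the ground-part sizes $a, a+m, a+2m,\dots$ all lie in the arithmetic progression generated by $b$, so that the cancellation between $(-q^a;q^m)_N$ and the sky denominator is clean. I would try to isolate the surviving factor and show it is a product of terms of the form $1/(q^{c};q^{m'})_k$ or $(-q^c;q^{m'})_k$ with positive coefficients.

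\textbf{The main obstacle.} The hard part will be handling the truncation, i.e.\ the mismatch between $N$ and $M$ and the finite length $N+M-1$ of the numerator. In the infinite case all the Pochhammer symbols telescope to infinity and the simplification \eqref{eq:simplifiedWt} is immediate; but with finite parameters the numerator $(-q^m;q^m)_{N+M-1}$ does not factor exactly into the pieces one would want to cancel against $(-q^a;q^m)_N$, and the condition $N \le M$ is precisely what prevents the leftover from acquiring negative coefficients. I anticipate that the clean algebraic cancellation breaks down and one is left with a residual signed sum that must be controlled combinatorially. The most promising route is therefore to construct an explicit sign-reversing involution on the triples $(\gamma,\rho,\sigma)$ generating the product: on triples with an odd number of ground parts (the $-1$-weighted ones) the involution should, as in the map $\phi$ of Section~\ref{sec:combinatorialproof}, transfer the two largest ground parts into the sky (or the numerator's distinct-parts factor), pairing them with an even-ground triple of the same size; the $N \le M$ hypothesis guarantees there is always room in the sky for the transferred part. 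The unmatched fixed points would then be exactly the positively-weighted objects, giving non-negativity. Verifying that this involution respects the truncation bounds $N$, $M$, and $N+M-1$ simultaneously, and that the condition $N \le M$ (rather than $N \ge M$) is the correct direction, is where the real work lies; I would first test the construction on small cases to pin down the precise matching rule before attempting the general argument.
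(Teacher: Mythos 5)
This statement is Conjecture~\ref{conj:finiteversion1}: the paper does not prove it, and it remains open. The only structural information the paper offers is the recursion $g(N,M) = g(N,M-1) + q^{mM-a}\,g(N-1,M)$ and the observation that the conjecture would imply Conjecture~\ref{conj:positivity}. So there is no proof in the paper to compare against, and the question is only whether your proposal actually closes the gap. It does not: what you have written is a research plan in which every decisive step is explicitly deferred. You propose two alternative mechanisms --- a finite algebraic cancellation ``reducing the problem to a manifestly positive finite product,'' or a sign-reversing involution on triples --- and in both cases you acknowledge that the construction is not carried out (``I anticipate that the clean algebraic cancellation breaks down,'' ``is where the real work lies,'' ``I would first test the construction on small cases''). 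A proof must exhibit the cancellation or the involution; neither is exhibited.

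Two concrete problems beyond the incompleteness. First, your combinatorial reading of the numerator is off: $(-q^m;q^m)_{N+M-1}=\prod_{k=1}^{N+M-1}(1+q^{mk})$ has non-negative coefficients and is an \emph{unsigned} generating function for partitions into distinct multiples of $m$; the only source of signs is $1/(-q^a;q^m)_N$. This matters because your proposed involution must cancel odd-ground objects against even-ground objects while also interacting with the numerator factor, and you have not specified how the distinct-multiples-of-$m$ component transforms. Second, the involution you sketch (``transfer the two largest ground parts into the sky'') is modeled on the map $\phi$ of Section~\ref{sec:combinatorialproof}, but that map is built on $\gamma'$, the \emph{conjugate} of the ground, and its iteration changes the number of sky parts by a variable amount; in the finite setting the sky is capped at $M$ parts and the ground at $N$ parts, and nothing in your sketch shows the image stays within these bounds, nor where the hypotheses $b\mid a$ and $N\le M$ actually enter the argument (you assert the latter ``guarantees there is always room in the sky'' without verification). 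Until one of your two mechanisms is made explicit and checked against the truncation, this remains a plausible strategy rather than a proof --- consistent with the fact that the authors themselves left the statement as a conjecture.
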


When we say that this version is well-poised, we mean that 
it satisfies a nice recursion, much like the Gaussian binomial coefficients do.
If we define
$$
g_{a,b,m}(N,M;q) = g(N,M) = \frac{(-q^{m};q^m)_{N+M-1}}{(-q^a;q^m)_N(q^b;q^m)_M},
$$
then
$$
g(N,M) = g(N,M-1) + q^{mM-a} g(N-1,M).
$$
Note that Conjecture \ref{conj:finiteversion1} would imply Conjecture \ref{conj:positivity}.

\bibliographystyle{amsplain}
\bibliography{copartitions}

\providecommand{\bysame}{\leavevmode\hbox to3em{\hrulefill}\thinspace}
\providecommand{\MR}{\relax\ifhmode\unskip\space\fi MR }
% \MRhref is called by the amsart/book/proc definition of \MR.
\providecommand{\MRhref}[2]{%
  \href{http://www.ams.org/mathscinet-getitem?mr=#1}{#2}
}
\providecommand{\href}[2]{#2}
\begin{thebibliography}{1}

\bibitem{Andrews18}
George~E. Andrews, \emph{Integer partitions with even parts below odd parts and
  the mock theta functions}, Ann. Comb. \textbf{22} (2018), no.~3, 433--445.

\bibitem{BursonEichhorn}
Hannah~E. Burson and Dennis Eichhorn, \emph{Copartitions}, Ann. Comb. (to
  appear).

\bibitem{Chern21}
Shane Chern, \emph{Note on partitions with even parts below odd parts}, Math.
  Notes \textbf{110} (2021), no.~3-4, 454--457.

\bibitem{overpartitions}
Sylvie Corteel and Jeremy Lovejoy, \emph{Overpartitions}, Trans. Amer. Math.
  Soc. \textbf{356} (2004), no.~4, 1623--1635.

\bibitem{CraigSeaweed}
William Craig, \emph{Seaweed algebras and the index statistic for partitions},
  arXiv:2112.09269 (2022).

\bibitem{StantonPositivity}
Dennis Stanton, \emph{Open positivity conjectures for integer partitions},
  Trends Math. \textbf{2} (1999), 19–25 (electronic).

\end{thebibliography}
\end{document}